\newtheorem{thm}{Theorem}[section]
\newtheorem{lem}[thm]{Lemma}
\newtheorem{prop}[thm]{Proposition}
\theoremstyle{definition}
\theoremstyle{remark}
\newtheorem{rem}[thm]{Remark}
\numberwithin{equation}{section}
\newcommand{\dist}{\text{dist }}
\begin{document}

\title[]
{A priori bound for nonlinear elliptic equation and system involving a fractional Laplacian}

\author{Woocheol Choi}
\subjclass[2010]{Primary 35J60}
\address{Department of Mathematical Sciences, Seoul National University, Seoul 151-747, Korea}
\email{chwc1987@snu.ac.kr}
\thanks{The research of this paper was supported by Global Ph.D Fellowship(300-20120003)
of the government of South Korea.}
\maketitle

\begin{abstract}
In this paper we study nonlinear elliptic system involving the fractional Laplacian on bounded domains. We obtain existence and non-existence results, $a~ priori$ estimates of Gidas-Spruck type, and the symmetric property.
\end{abstract}

\section{introduction}
In this paper we consider the following nonlinear problem:
\begin{eqnarray}\label{eq-sys}
\left\{ \begin{array}{ll} \mathcal{A}_{s} u = v^p & \quad \textrm{in} ~\Omega,
\\
\mathcal{A}_{s} v = u^{q}& \quad \textrm{in} ~\Omega,
\\
u >0,~ v>0 &\quad \textrm{in}~ \Omega,
\\
u=v =0& \quad \textrm{on}~ \partial\Omega,
\end{array}\right.
\end{eqnarray}
where $0< s < 1$, $p>1$, $q >1$, $\Omega$ is a smooth bounded domain of $\mathbb{R}^n$ and $\mathcal{A}_{s}$ denotes the fractional Laplace operator $(-\Delta)^s$ in $\Omega$ with zero Dirichlet boundary values on $\partial \Omega$, defined in terms of the spectra of the Dirichlet Laplacian $-\Delta$ on $\Omega$.

The fractional Laplacian appears in diverse areas including physics, biological modeling and mathematical finances and partial differential equations involving the fractional Laplacian have attracted the attention of many researchers.
An important feature of the fractional Laplacian is its nonlocal property, which makes it difficult to handle.
Recently, Caffarelli and Silvestre \cite{CS} developed a local interpretation of the fractional Laplacian given in $\mathbb{R}^n$ by considering a Neumann type operator in the extended domain $\mathbb{R}^{n+1}_{+} := \{(x,t) \in \mathbb{R}^{n+1}: t > 0\}$.
This observation made a significant influence on the study of related nonlocal problems.
A similar extension was devised by Cabr\'e and Tan \cite{CT} and Capella, D\'{a}vila, Dupaigne, and Sire \cite{CDDS} (see Br\"andle, Colorado, de Pablo, and S\'anchez \cite{BCPS1} and Tan \cite{T1} also).

Based on these extensions, many authors studied nonlinear problems of the form $\mathcal{A}_{s} u = f(u)$, where $f:\mathbb{R}^n \rightarrow \mathbb{R}$ is a certain function. When $s= {1 \over 2}$, Cabr\'e and Tan \cite{CT} established the existence of positive solutions for equations having nonlinearities with the subcritical growth, their regularity, the symmetric property, and a priori estimates of the Gidas-Spruck type by employing a blow-up argument along with a Liouville type result for the square root of the Laplacian in the half-space. Br\"andle, Colorado, de Pablo, and S\'anchez \cite{BCPS1}  dealt with a subcritical concave-convex problem.
For $f(u) = u^q$ with the critical and supercritical exponents $q \geq \frac{n+2s}{n-2s}$, the nonexistence of solutions was proved in \cite{BCPS1, T, T1}
in which the authors devised and used the Pohozaev type identities. The Brezis-Nirenberg type problem was studied in \cite{T} for $s=1/2$. Choi, Kim, and Lee \cite{CKL} studied the asymptotic behavior of solutions to the slightly sub-critical problem and the Brezis-Nirenberg type problem for $0<s<1$.
\

When $s=1$ the nonlinear problem \eqref{eq-sys} corresponds the Lane-Emden system, which have been studied extensively by numerous authors. We refer to \cite{CFM, FF, HV, FLN} and references therein, and the book \cite{QS} for a systematic study of this subject.  

Before studying the problem \eqref{eq-sys} we provide a different proof to the $a~ priori$ estimate for solutions to the problem 
\begin{equation}\label{eq-one}
\left\{ \begin{array}{ll} 
\mathcal{A}_s u = f(u)&\quad \textrm{in}~\Omega,
\\
u>0&\quad \textrm{in}~\Omega,
\\
u=0&\quad \textrm{on}~\partial \Omega.
\end{array}
\right.
\end{equation}
\begin{thm}\label{letn2}
Let $n \geq 2$ and $0<s<1$. Assume that $\Omega \subset \mathbb{R}^n$ is a smooth  bounded domain and $f(u) = u^p, ~1 <p< \frac{n+2s}{n-2s}.$
\

Then, there exists a constant $C(p,s,\Omega)$ depending only on $p$, $s$ and $\Omega$ such that every weak solution of $(1.1)$ satisfies
\begin{eqnarray*}
\| u\|_{L^{\infty} (\Omega)} \leq C(p,s,\Omega).
\end{eqnarray*}
Moreover, the statement holds for any function $f: \mathbb{R}_+ \rightarrow \mathbb{R}$ satisfying \textbf{Condition A} (see Section 4).
\end{thm}
The result of Theorem \ref{letn2} was proved by C\'abre-Tan \cite{CT} for $s=1/2$ and Tan \cite{T1} for $1/2 < s<1$. They employed the blow-up argument combining the Liouville type results. In our proof, first we obtain uniform bounds of $L^{\infty}$ norm near the boundary and $L^p (\Omega)$ norm for solutions to \eqref{eq-one}. Next, combining this with a local Pohohzaev inequality of Proposition \ref{prop-sub-estimate}, we shall get a uniform bound for a higher norm. Then we use the Sobolev embedding interatively to obtain the $L^{\infty}$ estimate. As this approach does not require a Liouville-type result, the function $f(u)$ is not required to have a precise asymptocity as $u \rightarrow \infty$. Moreover, since this approach is more flexible to obtain $a~ priori$ estimates for the nonlinear system \eqref{eq-sys}.
%In addition, our Lemma \ref{u0in} and Lemma \ref{suppo} will be crucial to study the asymptotic behaviour of positive solutions to the problem \eqref{a12u} near the critical exponent in the forthcoming paper \cite{CL}.
In studying the nonlinear system \eqref{eq-sys} we say that a pair of exponents $(p,q)$ is sub-critical if $\frac{1}{p+1}+\frac{1}{q+1} > \frac{n-2s}{n}$, critical if  $\frac{1}{p+1}+\frac{1}{q+1} = \frac{n-2s}{n}$, and super-critical if  $\frac{1}{p+1}+\frac{1}{q+1} < \frac{n-2s}{n}$.
Then we have the following existence result. 
\begin{thm}\label{thm-ex}Suppose that $(p, q)$ is sub-critical. Let $\alpha>0$ and $\beta>0$ be such that
\[
\frac{1}{2}-\frac{1}{p+1} < \frac{ \alpha}{n},\quad \frac{1}{2} - \frac{1}{q+1} < \frac{\beta}{n}, \quad \textrm{and} \quad \alpha + \beta =2s.
\]
Then, there exists a (nontrivial) weak solution $(u,v) \in H_0^{\alpha}(\Omega)\times H_0^{\beta}(\Omega)$ to the problem \eqref{12uv}.
\end{thm}
%The analogue problem to \eqref{12uvv} for the Laplacian is the well-known Lane-Emden system, which has been investigated widely in the last decades (see \cite{QS} and references therein). The main result of our study is the following theorem on a priori estimates of this system.
See Section 2 for the definition of weak solution and the space $H^{\alpha}_0 (\Omega)$. We shall obtain a Pohozaev type identity to obtain the following non-existence result.
\begin{thm}\label{thm-noex} Assume that the domain $\Omega$ is bounded and starshaped. Take $p>1$ and $q >1$ such that $(p,q)$ is critical or sub-critical, i.e.,  
\begin{eqnarray*}
\frac{1}{p+1} + \frac{1}{q+1} \leq \frac{n-2s}{n}.
\end{eqnarray*}
Then \eqref{eq-sys} does not has a bounded weak solution.
\end{thm}
Next we state the symmetric property of solutions.
\begin{thm}\label{thm-syme}
Suppose that a bounded smooth domain $\Omega \subset \mathbb{R}^n$ is convex in the $x_1$-direction and symmetric with respect to the hyperplane $\{x_1 =0\}$. Let $(u,v)$ be a $C^2 (\bar{\Omega})$ solution of \eqref{12uv}.
\

Then, the functions $u$ and $v$ are symmetric in $x_1$-direction, that is, $u(-x_1,x') = u(x_1,x')$, $v(x_1, x') = v(-x_1, x')$ for all $(x_1,x' ) \in \Omega$. Moreover we have $\frac{\partial u}{\partial x_1} < 0$ and $\frac{\partial v}{\partial x_1} < 0$ for $x_1 >0$.
\end{thm}
Finally we shall establish $a~priori$ estimates of Gidas-Spruck type.
\begin{thm}\label{letn22}
Assume that $\Omega \subset \mathbb{R}^n$ is a smooth convex bounded domain and $p>1$ and $q >1$ are such that $(p,q)$ is sub-critical.
%\begin{eqnarray*}
%\frac{1}{p+1} + \frac{1}{q+1} > \frac{n-2s}{n}.
%\end{eqnarray*}
Then, there exists a constant $C(p,q,\Omega)$, which depends only on $p$ and $\Omega$, such that every weak solution of \eqref{eq-sys} satisfies
\begin{eqnarray*}
\| u\|_{L^{\infty} (\Omega)} + \| v\|_{L^{\infty}(\Omega)} \leq C(p,q, \Omega).
\end{eqnarray*}
\end{thm}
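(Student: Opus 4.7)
The plan is to mirror, componentwise, the three-step scheme that the author outlines for Theorem \ref{letn2}, with the subcritical hyperbola hypothesis playing the role of the scalar subcritical exponent. First, via the Cabr\'e--Tan extension, replace \eqref{12uvv} by a local problem on $\mathcal{C}=\Omega\times(0,\infty)$: let $\tilde u,\tilde v$ be the harmonic extensions with $\tilde u=\tilde v=0$ on $\partial_L\mathcal{C}$ and $\partial\tilde u/\partial\nu=v^p$, $\partial\tilde v/\partial\nu=u^q$ on $\Omega\times\{0\}$. Since $\Omega$ is convex, I would apply the moving plane argument of \cite{CT} to each equation (treating the other component as an admissible positive right-hand side) to obtain a boundary strip $\Omega_\delta\subset\Omega$ on which $\|u\|_{L^\infty(\Omega_\delta)}+\|v\|_{L^\infty(\Omega_\delta)}\le C(\Omega,p,q)$, together with the corresponding control of $\tilde u,\tilde v$ in a neighborhood of $\partial_L\mathcal{C}$.

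Second, I would derive a Pohozaev identity for the extended system by testing the equations for $\tilde u$ and $\tilde v$ with the multipliers $(x-x_0)\cdot\nabla\tilde v$ and $(x-x_0)\cdot\nabla\tilde u$ respectively, summing, and integrating over the truncated cylinder $\Omega\times(0,R)$ before letting $R\to\infty$. In analogy with the classical Lane--Emden Pohozaev identity, the interior contribution collapses, after using the equations, to
\[
\left(\tfrac{n}{p+1}+\tfrac{n}{q+1}-(n-1)\right)\int_\Omega\!\bigl(a\,v^{p+1}+b\,u^{q+1}\bigr)\,dx
\]
for suitable positive weights $a,b$. The hypothesis $\tfrac{1}{p+1}+\tfrac{1}{q+1}>\tfrac{n-1}{n}$ makes this coefficient strictly positive, which is the precise arithmetic point at which the subcritical assumption enters.

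Third, the identity is balanced by boundary integrals on $\partial_L\mathcal{C}$ and on $\Omega\times\{0\}$, and, exactly as for Theorem \ref{letn2}, these nonlocal contributions cannot be controlled purely from the near-boundary $L^\infty$ bounds. I would resolve this with the partition-plus-Green-function argument the author describes for the scalar case: decompose each of $\tilde u,\tilde v$ into a part supported near $\partial\Omega$ (dominated by the moving plane bounds) and a globally defined remainder estimated through the Green function of \eqref{v0in} using Song's kernel bounds \cite{S}. Combined with the positivity of the interior coefficient, this yields
\[
\int_\Omega u^{q+1}\,dx+\int_\Omega v^{p+1}\,dx\le C(p,q,\Omega).
\]
Finally I would bootstrap using the $L^p$--$L^q$ estimates for \eqref{sufin} with $s=1/2$: from $v\in L^{p+1}$ one has $v^p\in L^{(p+1)/p}$, so $A_{1/2}u=v^p$ improves the integrability of $u$, and symmetrically for $v$; the subcritical hypothesis guarantees that each round strictly raises both exponents, so finitely many iterations give $u,v\in L^\infty(\Omega)$.

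The main obstacle is the Pohozaev step. The difficulty is twofold: first, choosing the correct linear combination of the two extension equations so that the interior integrand collapses to a positive multiple of $a\,v^{p+1}+b\,u^{q+1}$ with prefactor dictated exactly by the critical hyperbola; and second, transplanting the scalar partition and Green-function estimate of Theorem \ref{letn2} to the coupled nonlocal trace identity without destroying that positivity, since now two distinct nonlinearities $v^p$ and $u^q$ generate the nonlocal boundary contributions simultaneously.
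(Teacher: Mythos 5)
Your overall architecture is exactly the paper's: extend to the cylinder, use convexity and a moving plane argument to get $L^\infty$ bounds on a boundary strip, prove a Pohozaev identity for the system whose interior coefficient is positive precisely under the subcritical hyperbola condition, control the lateral boundary term by the scalar partition-plus-Green-function lemmas, and finish by an alternating bootstrap with the $L^p$--$L^q$ estimates. Your arithmetic for the Pohozaev coefficient is also consistent with the paper's, since $\int_{\mathcal C}\nabla \tilde u\cdot\nabla \tilde v = \int_\Omega u^{q+1} = \int_\Omega v^{p+1}$, so the split with a parameter $\theta\in(0,1)$ used in the paper is equivalent to your single prefactor $\frac{n}{p+1}+\frac{n}{q+1}-(n-1)$.

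The one step you cannot carry out as stated is the moving plane argument ``applied to each equation separately, treating the other component as an admissible positive right-hand side.'' The moving plane method compares $u(x)$ with its reflection $u(x_\lambda)$, and for the equation $\partial \tilde u/\partial\nu=v^p$ the comparison of Neumann data at $x$ and $x_\lambda$ requires a sign on $v^p(x_\lambda)-v^p(x)$ --- that is, it requires the monotonicity of the \emph{other} unknown, which is exactly what you are trying to prove. The two reflections must therefore be run simultaneously, and the paper does this by first proving a coupled maximum principle in domains of small measure (its Lemma 4.5): for the pair $(\alpha_\lambda,\beta_\lambda)=(u_\lambda-u,\,v_\lambda-v)$ one multiplies the two energy inequalities and uses the Sobolev trace inequality, so that the smallness condition falls on the \emph{product} $|\{u_\lambda<u\}|\cdot|\{v_\lambda<v\}|$. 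Without this coupled lemma the first step of your proof does not close. A second, smaller omission: the interior (Green-function) part of the lateral boundary estimate needs an a priori $L^1$ bound on $v^p$ and $u^q$ away from $\partial\Omega$; in the system case this comes from testing both equations against the first eigenfunction $\phi_1$ and combining the two resulting Jensen inequalities ($\int u\phi_1\gtrsim(\int u\phi_1)^{pq}$ with $pq>1$), which you should state explicitly since it is not a verbatim repetition of the scalar argument.
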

The rest of this paper is organized as follows. In Section 2 we briefly review the basic results concerning the fractional Laplacian. In section 3, we shall prove two important estimates for solutions to \eqref{eq-sys} and \eqref{eq-one} from the Pohozaev identities. These estimates will be used importantly in the proofs of $a~priori$ estimates. In Section 4 we prove Theorem \ref{letn2}. The nonlinear system \eqref{eq-sys} will be studied throughout Section 5. First we establish the existence and the non-existence results of Theorem \ref{thm-ex} and Theorem \ref{thm-noex}. Then we obtain a Brezis-Kato type result and study the regularity of solutions to \eqref{eq-sys}. Next, we establish a moving plane argument to prove Theorem \ref{thm-syme}. Finally, we shall prove Theorem \ref{letn22}.

%We shall use the notation $\lesssim $ when the constant of an estimate $A \leq CB$ depends only on the fucntion $f$, the domain $\Omega$ and some parameters fixed in the proofs.
\section{Preliminaries}

In this section we first recall the backgrounds of the fractional Laplacian. We refer to \cite{BCPS1, CT, CS, CDDS, T1} for the details.
\subsection{Fractional Sobolev spaces, fractional Laplacians and $s$-harmonic extensions}\label{subsec_frac_Sob}
Let $\Omega$ be a smooth bounded domain of $\mathbb{R}^n$.
Let also $\{ \lambda_k, \phi_k\}_{k=1}^{\infty}$ be a sequence of the eigenvalues and corresponding eigenvectors of the Laplacian operator $-\Delta$ in $\Omega$ with the zero Dirichlet boundary condition on $\partial \Omega$,
\[\left\{ \begin{array}{ll}
- \Delta \phi_k = \lambda_k \phi_k &\text{in}~ \Omega,\\
\phi_k = 0 &\text{on}~ \partial \Omega,\\
\end{array}\right.\]
such that $\|\phi_k\|_{L^2(\Omega)} = 1$ and $\lambda_1 < \lambda_2 \le \lambda_3 \le \cdots$.
Then we set the fractional Sobolev space $H_0^s (\Omega)$ $(0 < s < 1)$ by
\begin{equation}\label{H_0^s}
H_0^s (\Omega) = \left\{ u = \sum_{k=1}^{\infty} a_k \phi_k \in L^2 (\Omega) : \sum_{k=1}^{\infty} a_k^2 \lambda_k^{s} < \infty \right\},
\end{equation}
which is a Hilbert space whose inner product is given by
\[\left\langle \sum_{k=1}^{\infty} a_k \phi_k, \sum_{k=1}^{\infty} b_k \phi_k \right\rangle_{H_0^s(\Omega)} = \sum_{k=1}^{\infty} a_k b_k\lambda_k^s \qquad \text{if } \sum_{k=1}^{\infty} a_k \phi_k,\ \sum_{k=1}^{\infty} b_k \phi_k \in H_0^s(\Omega). \]
Moreover, for a function in $H_0^{s}(\Omega)$, we define the fractional Laplacian $\mathcal{A}_{s} : H_0^s (\Omega) \rightarrow H_0^s (\Omega) \simeq H_0^{-s} (\Omega)$ as
\[\mathcal{A}_{s} \left ( \sum_{k=1}^{\infty} a_k \phi_k \right) = \sum_{k=1}^{\infty} a_k \lambda_k^s \phi_k.\]
We also consider the square root $\mathcal{A}_{s}^{1/2}: H_0^s(\Omega) \to L^2(\Omega)$ of the positive operator $\mathcal{A}_{s}$ which is in fact equal to $\mathcal{A}_{s/2}$. Note that by the above definitions, we have
\[\left\langle u, v \right\rangle_{H_0^s(\Omega)} = \int_{\Omega} \mathcal{A}_{s}^{1/2}u \cdot \mathcal{A}_{s}^{1/2}v = \int_{\Omega} \mathcal{A}_{s} u \cdot v \quad \text{for } u, v \in H_0^s(\Omega). \]

\medskip
Regarding \eqref{u0inc} (see also \eqref{s-extension} below), we need to introduce some more function spaces on $\mathcal{C} = \Omega \times (0, \infty)$ where $\Omega$ is either a smooth bounded domain.
If $\Omega$ is bounded, the function space $H_{0,L}^s(\mathcal{C})$ is defined as the completion of
\[ C_{c,L}^{\infty}(\mathcal{C})
:= \left\{ U \in C^{\infty}\left(\overline{\mathcal{C}}\right) : U = 0 \text{ on } \partial_L\mathcal{C} = \partial \Omega \times (0,\infty) \right\}\]
with respect to the norm
\begin{equation}\label{weighted_norm}
\|U\|_{\mathcal{C}} = \left( \int_{\mathcal{C}} t^{1-2s} |\nabla U|^2 \right)^{1 \over 2}.
\end{equation}
Then it is a Hilbert space endowed with the inner product
\[(U,V)_{\mathcal{C}} = \int_{\mathcal{C}} t^{1-2s} \nabla U \cdot \nabla V \quad \text{for} \quad U, \ V \in H_{0,L}^{s}(\mathcal{C}).\]
Recall that if $\Omega$ is a smooth bounded domain, it is verified that
\begin{equation}\label{eq_Sobo_trace}
H_0^s(\Omega) = \{u = \text{tr}|_{\Omega \times \{0\}}U: U \in H^s_{0,L}(\mathcal{C})  \}
\end{equation}
in \cite[Proposition 2.1]{CS} and \cite[Proposition 2.1]{CDDS} and \cite[Section 2]{T1}.
Furthermore, it holds that
\[\|U(\cdot, 0)\|_{H^s(\mathbb{R}^n)} \le C \|U\|_{\mathbb{R}^{n+1}_+}\]
for some $C > 0$ independent of $U \in \mathcal{D}^s(\mathbb{R}^{n+1}_+)$.

\medskip
Now we may consider the fractional harmonic extension of a function $u$ defined in $\Omega$, where $\Omega$ is a smooth bounded domain.
By the celebrated results of Caffarelli-Silvestre \cite{CS} (for $\mathbb{R}^n$) and Cabr\'e-Tan \cite{CT} (for bounded domains, see also \cite{CDDS, BCPS1, T1}),
if we set $U \in H^s_{0,L}(\mathcal{C})$ (or $\mathcal{D}^s(\mathbb{R}^{n+1}_+)$) as a unique solution of the equation
\begin{equation}\label{s-extension}
\left\{ \begin{array}{ll} \text{div}(t^{1-2s} \nabla U) = 0 &~\text{in}~ \mathcal{C},\\
U = 0 & ~\text{on}~ \partial_L \mathcal{C},\\
U(x,0)= u(x) &~ \text{for}~ x \in \Omega,
\end{array}\right.
\end{equation}
for some fixed function $u \in H^s_0(\Omega)$ (or $H^s(\mathbb{R}^n)$),
then $\mathcal{A}_{s}u = \partial_{\nu}^s U|_{\Omega \times \{0\}}$ where the operator $u \mapsto \partial_{\nu}^s U|_{\Omega \times \{0\}}$ is defined in \eqref{pns}. 
Consequently, we study the following type local problem on a half-cylinder $\mathcal{C}:= \Omega \times [0, \infty)$,
\begin{equation}\label{u0inc}
\left\{ \begin{array}{ll} \text{div}(t^{1-2s} \nabla U )= 0&\quad \text{in}~ \mathcal{C} = \Omega \times (0,\infty),\\
U = 0 &\quad \text{on} ~\partial_L \mathcal{C} : = \partial \Omega \times (0, \infty),\\
\partial_{\nu}^{s} { U} =g(x) & \quad \text{on} ~\Omega \times \{0\},
\end{array}
\right.
\end{equation}
where $\nu$ is the outward unit normal vector to $\mathcal{C}$ on $\Omega \times \{0\}$ and
\begin{equation}\label{pns}
\partial_{\nu}^{s}U(x,0):= -C_s^{-1} \left(\lim_{t \rightarrow 0+} t^{1-2s} \frac{\partial U}{\partial t}(x,t)\right) \quad \text{for } x \in \Omega
\end{equation}
where $C_s:={2^{1-2s}\Gamma(1-s)}/{\Gamma(s)}$. Under appropriate regularity assumptions, the trace of a solution $U$ of \eqref{u0inc} on $\Omega \times \{0\}$ solves the nonlinear problem 
\begin{equation}
\left\{\begin{array}{ll} \mathcal{A}_s u (x) = g(x)&\quad \textrm{in}~\Omega,
\\
u=0&\quad \textrm{on}~\partial \Omega.
\end{array}
\right.
\end{equation} 
For $n \geq 2$, we have the following Sobolev trace inequality 
\begin{eqnarray*}
\left( \int_{\Omega} |w(x,0)|^{2n/(n-1)} dx \right)^{(n-1)/2n} \leq C \left(\int_{\mathcal{C}} |\nabla w (x,y)|^2 dx dy\right)^{1/2} \quad \forall w \in H_{0,L}^{1}(\mathcal{C}),
\end{eqnarray*}
where the constant $C>0$ depends on the dimension $n$.
We set $\textrm{tr}_{\Omega}$ be the trace operator on $\Omega \times \{0\}$ for functions in $H_{0,L}^{1}(\mathcal{C})$:
\begin{eqnarray*}
\textrm{tr}_{\Omega} v : = v(x,0)~\textrm{for}~v \in H_{0,L}^{1}(\mathcal{C}).
\end{eqnarray*}
By weak solutions, we mean the following:
Let $g \in L^{\frac{2N}{N+2s}}(\Omega)$. Given the problem
\begin{equation}\label{eq-weak1}
\left\{\begin{array}{ll}
\mathcal{A}_s u = g(x) &\quad \textrm{in}~\Omega,
\\
u = 0 &\quad \textrm{on}~\partial\Omega,
\end{array}\right.
\end{equation}
we say that a function $u \in H_0^{s}(\Omega)$ is a weak solution of \eqref{eq-weak1} provided
\begin{equation}
\int_{\Omega} \mathcal{A}_s^{1/2} u \cdot \mathcal{A}_s^{1/2} \phi \,dx = \int_{\Omega} g(x)\phi(x)\,dx
\end{equation}
for all $\phi \in H^s_0(\Omega)$.
Also, given the problem
\begin{equation}\label{eq-weak2}
\left\{
\begin{array}{ll}
\textrm{div}(t^{1-2s} \nabla U) = 0 &\quad \textrm{in}~\mathcal{C},
\\
U = 0 &\quad \textrm{on}~\partial_{L}\mathcal{C},
\\
\partial_{\nu}^{s} U = g(x)&\quad \textrm{on}~\Omega \times\{0\},
\end{array}
\right.
\end{equation}
we say that a function $U \in H_0^1(t^{1-2s}, \mathcal{C})$ is a weak solution of \eqref{eq-weak2} provided
\begin{equation}
\int_{\mathcal{C}} t^{1-2s}  \nabla U(x,t) \cdot \nabla \Phi(x,t)\, dx dt = C_s\int_{\Omega} g(x)\Phi(x,0)\,dx
\end{equation}
for all $\Phi \in H_0^1(t^{1-2s}, \mathcal{C})$.
We have the following trace inequality.
\begin{equation}\label{eq-sharp-trace}
\left( \int_{\Omega} |U(x,0)|^{2^*(s)} dx \right)^{\frac{1}{2^*(s)}} \leq \frac{\mathcal{S}_{N,s}}{\sqrt{C_s}} \left( \int_{\mathcal{C}} t^{1-2s} |\nabla U(x,t)|^2 dx dt \right)^{1 \over 2},
\quad U \in H^1_0(t^{1-2s}, \mathcal{C}).
\end{equation}
Next we state the embedding result.\begin{lem}[see \cite{CS}]\label{lem-trace}
Let $w \in L^{p}(\Omega)$  for some $p < \frac{N}{2s}$. Assume that $U$ is a weak solution of the problem
\begin{equation}\label{eq-lem-basic}
\left\{\begin{array}{ll}
\textrm{\em div}(t^{1-2s} \nabla U) = 0&\quad \textrm{in}~\mathcal{C},
\\
U=0&\quad \textrm{on}~\partial_{L}\mathcal{C},
\\
\partial_{\nu}^s U = w &\quad \textrm{on}~\Omega \times \{0\}.
\end{array}
\right.
\end{equation}
Then we have
\begin{equation}
\left\| U(\cdot, 0)\right\|_{L^q (\Omega)} \leq C_{p,q} \left\| w \right\|_{L^p (\Omega)},
\end{equation}
for any $q$ such that $\frac{N}{q} \leq \frac{N}{p} -2s$.
\end{lem}
\begin{proof}
We multiply \eqref{eq-lem-basic} by $|U|^{\beta-1}U$ for some  $\beta >1$ to get
\begin{equation}
 \int_{\Omega} w(x) |U|^{\beta-1} U (x,0)\, dx= \beta \int_{\mathcal{C}} t^{1-2s} |U|^{\beta-1}|\nabla U|^2\, dxdt.
\end{equation}
Then, applying the trace embedding \eqref{eq-sharp-trace} and H\"older's inequality we can observe
\begin{equation}\label{eq-lem-beta}
\left\| |U|^{\frac{\beta+1}{2}}(\cdot,0) \right\|^2_{L^{\frac{2N}{N-2s}}(\Omega)} \leq C_\beta \left\| |U|^{\beta}(\cdot,0)\right\|_{L^{\frac{\beta+1}{2\beta}\cdot \frac{2N}{N-2s}}} \left\| w\right\|_{p},
\end{equation}
where $p$ satisfies $\frac{1}{p} + \frac{(N-2s)\beta}{N(\beta+1)} =1$. Let $q= \frac{N(\beta+1)}{N-2s}$, %It holds that $n/q = n/p -2s$, and
then \eqref{eq-lem-beta} gives the desired inequality.
\end{proof}
%\begin{proof}
%We multiply \eqref{eq-lem-basic} by $|U|^{\beta-1}U$ for some  $\beta >1$ to get
%\begin{equation}
% \int_{\Omega} w(x) |U|^{\beta-1} U (x,0)\, dx= \beta \int_{\mathcal{C}} t^{1-2s} |U|^{\beta-1}|\nabla U|^2\, dxdt.
%\end{equation}
%Then, applying the trace embedding \eqref{eq-sharp-trace} and H\"older's inequality we can observe
%\begin{equation}\label{eq-lem-beta}
%\left\| |U|^{\frac{\beta+1}{2}}(\cdot,0) \right\|^2_{L^{\frac{2N}{N-2s}}(\Omega)} \leq C_\beta \left\| |U|^{\beta}(\cdot,0)\right\|_{L^{\frac{\beta+1}{2\beta}\cdot \frac{2N}{N-2s}}} \left\| w\right\|_{p},
%\end{equation}
%where $p$ satisfies $\frac{1}{p} + \frac{(N-2s)\beta}{N(\beta+1)} =1$. Let $q= \frac{N(\beta+1)}{N-2s}$, %It holds that $n/q = n/p -2s$, and
%then \eqref{eq-lem-beta} gives the desired inequality.
%\end{proof}

\section{A local Pohozaev inequality}
In this section, we prove a useful inequality satisfied by solutions to \eqref{eq-sys}, which will be crucially used in the proof of Theorem \ref{letn2} and Theorem \ref{letn22}. For each $r>0$ we set $\mathcal{I}(\Omega,{r}) = \{ x  \in \Omega : \textrm{dist}(x,\partial \Omega) \geq r \}$ and $\mathcal{O}(\Omega,r) = \{x \in \Omega : \textrm{dist}(x, \partial \Omega ) < r\}$. Then we have the following results.
\begin{prop}\label{prop-sub-estimate}\mbox{~}
\begin{enumerate}
\item Suppose that $U \in {H^s_{0,L}(\mathcal{C})}$ is a solution of the problem \eqref{u0inc} with $f$ such that $f = F'$ for a function $F \in C^1 (\mathbb{R})$.
Then, for each $ \delta >0$ and $q >\frac{n}{s}$ there is a constant $C= C(\delta, q) >0$ such that
\begin{equation}\label{eq_local_poho}
\begin{aligned}
&\min_{r \in [\delta, 2\delta]}\left|n \int_{\mathcal{I}(\Omega,{r/2})\times\{0\}} F(U) dx - \left(\frac{n-2s}{2}\right) \int_{ \mathcal{I}(\Omega,{r/2}) \times \{0\}} U f(U) dx\right|
\\
&\leq C \left[ \left(\int_{\mathcal{O}(\Omega,{2\delta})\times \{0\}} |f(U)|^{q} dx\right)^{2 \over q} + \int_{\mathcal{O}(\Omega,2\delta)\times \{0\}} |F(U)|dx + \left( \int_{\mathcal{I}(\Omega, \delta/2) \times \{0\}} |f(U)| dx \right)^2 \right].
\end{aligned}
\end{equation}
\item
Suppose that $U \in {H^s_{0,L}(\mathcal{C})}$ is a solution of the problem \eqref{u0inc} with $f$ such that $f = F'$ for a function $F \in C^1 (\mathbb{R})$.
Then, for each $ \delta >0$ and $q >\frac{n}{s}$ there is a constant $C= C(\delta, q) >0$ such that
\begin{equation}\label{eq_local_poho_2}
\begin{aligned}
&\min_{r \in [\delta, 2\delta]}\left|n \int_{\mathcal{I}(\Omega,{r/2})\times\{0\}} \left[ F(U)+G(V)\right]~ dx - \int_{ \mathcal{I}(\Omega,{r/2}) \times \{0\}}\left[\left(\frac{n-2s}{2}- \theta \right) U f(U) + \theta V g(V) \right]~dx \right|
\\
&\leq C \left(\int_{\mathcal{O}(\Omega,{2\delta})\times \{0\}} (|f(U)|+|g(V)|)^{q} dx\right)^{2 \over q} + \int_{\mathcal{O}(\Omega,2\delta)\times \{0\}} |F(U)|+|G(V)| dx 
\\
&\quad\quad\quad\quad\quad\quad\quad\quad\quad\quad\quad\quad\quad\quad\quad\quad\quad\quad\quad\quad\quad\quad\quad\quad + \left( \int_{\mathcal{I}(\Omega, \delta/2) \times \{0\}} |f(U)|+|g(V)| dx \right)^2.
\end{aligned}
\end{equation}
\end{enumerate}
\end{prop}
\begin{rem}\label{rem-onetwo} The statement (1) of Proposition \ref{prop-sub-estimate} was proved in \cite{CKL}. We note that a solution $u$ to \eqref{eq-one} with $f(x) =x^p$ satisfies $(u,u)$ satisfies \eqref{eq-sys} with $q=p$. Thus the statement (1) follows directly from the statement (2) in Proposition \ref{prop-sub-estimate}.
\end{rem}

\begin{proof}
By the above remark, it sufices to prove \eqref{eq_local_poho_2}. By a direct computation, we have the following identity
\begin{equation}\label{eq-prop-id}
\textrm{div}[t^{1-2s}(z,\nabla v) \nabla u +t^{1-2s}(z,\nabla u) \nabla v] - \textrm{div}(t^{1-2s}z(\nabla u \cdot \nabla v)) + (n-2s)t^{1-2s} \nabla u \cdot \nabla v =0.
\end{equation}
For a given set $A \in \mathcal{C}$, using integration by parts we have
\begin{equation}\label{eq-3-1}
\begin{split}
\int_{A} t^{1-2s} \nabla u \cdot \nabla v~ dxdt&=  \int_{\partial^{+} A} t^{1-2s} (\nabla u, \nu) v~ dS + \int_{\partial_b A} \partial_{\nu}^s u~ v(x) dx
\\
&=\int_{\partial^{+}A} t^{1-2s}(\nabla v,\nu) u ~dS + \int_{\partial_b A} \partial_{\nu}^s u~ v (x)dx.
\end{split}
\end{equation}
Also we have
\begin{equation}\label{eq-3-2}
\begin{split}
&\int_A \textrm{div} \left[ t^{1-2s}(z,\nabla v) \nabla u + t^{1-2s} (z,\nabla u) \nabla v\right] dx dt
\\
&\quad\quad = \int_{\partial^{+}A} \left[ t^{1-2s} (z,\nabla v) (\nabla u, \nu) + t^{1-2s}(z,\nabla u) (\nabla v, \nu) \right] dS + \int_{\partial_b A}(x, \nabla_x v) \partial_{\nu}^s u + (x,\nabla_x u) \partial_{\nu}^s v dx,
\end{split}
\end{equation}
and
\begin{equation}\label{eq-3-3}
\int_{A} \textrm{div}(t^{1-2s} z (\nabla u \cdot \nabla v)) = \int_{\partial^{+}A} t^{1-2s} (z,\nu) (\nabla u \cdot \nabla v) dS.
\end{equation}
We define the following sets:
\begin{align*}
D_r &= \left\{ z \in \mathbb{R}^{n+1}_{+} : \textrm{dist}(z,\mathcal{I}(\Omega,r) \times \{0\}) \leq r/2\right\},\\
\partial D_r^{+} &= \partial D_r \cap \left\{(x,t) \in \mathbb{R}^{n+1}: t>0\right\} \quad \text{and} \quad E_{\delta} = \bigcup_{r = \delta}^{2\delta} \partial D_{r}^{+}.
\end{align*}
Note that $\partial D_r = \partial D_r^{+} \cup (\mathcal{I}(\Omega,r/2)\times \{0\})$.
Fix a small number $\delta >0$ and a value $\theta >0$.
We integrate the identity \eqref{eq-prop-id} over $D_r$ for each $r \in (0,2\delta]$ to derive 
\begin{equation}
\begin{split}
&\theta \int_{\mathcal{I}(\Omega, r/2)\times\{0\}} \partial_{\nu}^s u \cdot v dx + (n-2s-\theta) \int_{\mathcal{I}(\Omega, r/2)\times\{0\}} \partial_{\nu}^s v \cdot u dx
\\
&\qquad\qquad\qquad\qquad\qquad\qquad\qquad\qquad\qquad + \int_{\mathcal{I}(\Omega, r/2)\times\{0\}} \left[(x,\nabla_x v) \partial_{\nu}^s u + (x,\nabla_x u) \partial_{\nu}^s v \right] dx
\\
&= -\theta \int_{\partial D_r^{+}} t^{1-2s} (\nabla u, \nu) v dS  - (n-2s-\theta) \int_{\partial D_r^{+}} t^{1-2s} (\nabla v, \nu) u dS 
\\
&\quad  + \int_{\partial D_r^{+}} t^{1-2s}(z,\nu) (\nabla u \cdot \nabla v) dS - \int_{\partial D_r^{+}} \left[ t^{1-2s} (z,\nabla v)(\nabla u, \nu) + t^{1-2s}(z,\nabla u)(\nabla v, \nu) \right] dS,
\end{split}
\end{equation}
where \eqref{eq-3-1}, \eqref{eq-3-2}, and \eqref{eq-3-3} are used.
%In view of Lemmas 4.4 and 4.5 of \cite{CS2}, one can deduce that the $i$-th component $\partial_{x_i} U$ of $\nabla_x U$ is H\"older continuous in $\overline{D_r}$ for each $i = 1, \cdots, n$, which justifies the above formula.
By using $\partial_{\nu}^s U = f(V)$, $\partial_{\nu}^s V= g(V)$ and performing integration by parts, we derive
\begin{equation*}
\begin{split}
&\theta \int_{\mathcal{I}(\Omega, r/2)\times\{0\}} g(v) \cdot v dx + (n-2s-\theta) \int_{\mathcal{I}(\Omega, r/2)\times\{0\}} f(u) \cdot u dx - \int_{\mathcal{I}(\Omega, r/2)\times\{0\}} \left[n F(u) + n G(v) \right] dx
\\
&= -\theta \int_{\partial D_r^{+}} t^{1-2s} (\nabla u, \nu) v dS  - (n-2s-\theta) \int_{\partial D_r^{+}} t^{1-2s} (\nabla v, \nu) u dS 
\\
&\quad  + \int_{\partial D_r^{+}} t^{1-2s}(z,\nu) (\nabla u \cdot \nabla v) dS - \int_{\partial D_r^{+}} \left[ t^{1-2s} (z,\nabla v)(\nabla u, \nu) + t^{1-2s}(z,\nabla u)(\nabla v, \nu) \right] dS
\\
&\quad + \int_{\partial\mathcal{I}(\Omega, r/2)\times\{0\}} (x,\nu)( F(u) + G(v)) dS.
\end{split}
\end{equation*}
From this identity we get
\begin{multline*}
\left| \int_{\mathcal{I}(\Omega, r/2)\times\{0\}} \left[\theta g(v) \cdot v - n G(v)\right] dx +  \int_{\mathcal{I}(\Omega, r/2)\times\{0\}} \left[(n-2s-\theta) f(u) \cdot u - n F(u)\right] dx\right|
\\
\leq C \int_{\partial D_r^{+}} t^{1-2s} (|\nabla U|^2 + U^2 + |\nabla V|^2 + V^2) dS + \int_{\partial \mathcal{I}(\Omega,{r/2}) \times\{0\}} \langle x,\nu \rangle  (F(U)+G(V)) dS_x.
\end{multline*}
We integrate this identity with respect to $r$ over an interval $[\delta, 2\delta]$ and then use the Poincar\'e inequality. Then we observe
\begin{multline*}
\min_{r \in [\delta ,2\delta]} \left| \int_{\mathcal{I}(\Omega, r/2)\times\{0\}} \left[\theta g(v) \cdot v - n G(v)\right] dx +  \int_{\mathcal{I}(\Omega, r/2)\times\{0\}} \left[(n-2s-\theta) f(u) \cdot u - n F(u)\right] dx\right|
\\
\leq C \int_{E_{\delta}}t^{1-2s}  (|\nabla U|^2 + U^2 + |\nabla V|^2 + V^2)  dz  + C \int_{\mathcal{O}(\Omega,{\delta})}|F(U)(x,0)|+|G(V)(x,0)| dx.
\end{multline*}
We only need to estimate the first term of the right-hand side of the previous inequality since the second term is already one of the terms which constitute the right-hand side of \eqref{eq_local_poho}. Note that
\begin{equation}\label{eq-U-decom}
\nabla_z U(z) = \int_{\Omega} \nabla_z  G_{\mathbb{R}^{n+1}_{+}} (z,y) f(U)(y,0) dy - \int_{\Omega} \nabla_z H_{\mathcal{C}}(z,y) f(U)(y,0) dy
\end{equation}
for $z \in E_{\delta}$.

Let us deal with the last term of \eqref{eq-U-decom} first.
Admitting the estimation
\begin{equation}\label{eq-appendix-H-pre}
\sup_{y \in \Omega} \int_{E_{\delta}} t^{1-2s} |\nabla_z H_{\mathcal{C}}(z,y)|^2 dz \le C
\end{equation}
for a while and using H\"older's inequality, we get
\begin{equation}\label{eq-appendix-H}
\begin{aligned}
&\ \int_{E_{\delta}} t^{1-2s} \left( \int_{\Omega} |\nabla_z H_{\mathcal{C}}(z,y) f(U)(y,0)| dy \right)^2 dz
\\
&\le \left(\sup_{y \in \Omega} \int_{E_{\delta}} t^{1-2s} |\nabla_z H_{\mathcal{C}}(z,y)|^2 dz\right) \left( \int_{\Omega} |f(U)(y,0)| dy\right)^2 \le C \left( \int_{\mathcal{I}(\Omega,{\delta}) \cup \mathcal{O}(\Omega,{\delta})} |f(U)(y,0)| dy\right)^2
\\
&\le C \left[\left(\int_{\mathcal{O}(\Omega,{2\delta})} |f(U)(y,0)|^q dy\right)^{2 \over q} + \left( \int_{\mathcal{I}(\Omega, \delta/2) } |f(U)(y,0)| dy \right)^2\right],
\end{aligned}
\end{equation}
which is a part of the right-hand side of \eqref{eq_local_poho}.

The validity of \eqref{eq-appendix-H-pre} can be reasoned as follows.
First of all, if $y$ is a point in $\Omega$ such that $\dist(y, E_{\delta}) \leq \delta/ 2$,
then it automatically satisfies that $\dist (y, \partial \Omega) \geq \delta/2$ from which we know
\[\sup_{\dist(y,\partial \Omega) \ge \delta/2} \left(\int_{E_{\delta}} t^{1-2s} |\nabla_z H_{\mathcal{C}}(z,y)|^2 dz\right) \le \sup_{\dist(y, \partial \Omega) \geq \delta/2} \left(\int_{\mathcal{C}} t^{1-2s} |\nabla_z H_{\mathcal{C}} (z,y) |^2 dz\right) \le C.\]
See the proof of Lemma 2.2 in \cite{CKL} for the second inequality.
Meanwhile, in the complementary case $\dist (y, E_{\delta}) > \delta /2$, we can assert that
\begin{equation}\label{eq-h-nabla}
\int_{E_{\delta}} t^{1-2s} |\nabla_z H_{\mathcal{C}}(z,y)|^2 dz
\leq C \left(\int_{N(E_{\delta}, \delta/4)} t^{1-2s} |H_{\mathcal{C}}(z,y)|^2 dz\right)
\end{equation}
where $N(E_{\delta}, \delta/4) := \{ z \in \mathcal{C}: \dist (z, E_{\delta}) \leq \delta /4\}$.
To show this, we recall that $H_{\mathcal{C}}$ satisfies
\begin{equation}\label{eq-appendix-h}
\left\{\begin{array}{ll}
\textrm{div}(t^{1-2s} \nabla H_{\mathcal{C}}(\cdot, y)) = 0 &\quad \textrm{in}~\mathcal{C},
\\
\partial_{\nu}^{s} H_{\mathcal{C}}(\cdot, y) = 0 &\quad \textrm{on}~\Omega \times \{0\}.
\end{array}\right.
\end{equation}
Fix a smooth function $\phi \in C^{\infty}_{0} (N(E_{\delta}, \delta/4))$ such that $\phi =1$ on $E_{\delta}$ and $|\nabla \phi|^2 \leq C_0 \phi$ holds for some $C_0 > 0$,
and multiply $H_{\mathcal{C}}(\cdot,y) \phi (\cdot)$ to \eqref{eq-appendix-h}.
Then we have
\[\int_{\mathcal{C}} t^{1-2s}  | \nabla H_{\mathcal{C}} (z,y)|^2 \phi (z) + \int_{\mathcal{C}} t^{1-2s}[\nabla H_{\mathcal{C}}(z,y) \cdot \nabla \phi (z)] H_{\mathcal{C}}(z,y) dz = 0.\]
From this we deduce that
\begin{align*}
&\ \int_{\mathcal{C}} t^{1-2s}  | \nabla H_{\mathcal{C}} (z,y)|^2 \phi (z) dz
\\
&= - \int_{\mathcal{C}} t^{1-2s}[\nabla H_{\mathcal{C}}(z,y) \cdot \nabla \phi (z)] H_{\mathcal{C}}(z,y) dz
\\
& \leq \frac{1}{2 C_0} \int_{\mathcal{C}}t^{1-2s} |\nabla H_{\mathcal{C}} (z,y)|^2 |\nabla \phi (z)|^2 dz + 2C_0 \int_{N(E_{\delta}, \delta/4)} t^{1-2s} |H_{\mathcal{C}}(z,y)|^2 dz.
\end{align*}
Using the property $|\nabla \phi|^2 \leq C_0 \phi$ we derive that
\[\int_{\mathcal{C}} t^{1-2s}  | \nabla H_{\mathcal{C}} (z,y)|^2 \phi (z) dz \leq 4C_0 \int_{N(E_{\delta}, \delta/4)} t^{1-2s} |H_{\mathcal{C}}(z,y)|^2 dz.\]
It verifies inequality \eqref{eq-h-nabla}.
Since the assumption $\dist (y, E_{\delta}) > \delta /2$ implies $\dist(y, N(E_{\delta}, \delta/4)) > \delta /4$, it holds
\[\sup_{\dist (y, E_{\delta}) > \delta/2} \sup_{z \in N (E_{\delta},\delta/4)} |H_{\mathcal{C}}(z,y)|
\le \sup_{\dist (y, E_{\delta}) > \delta/2} \sup_{z \in N (E_{\delta},\delta/4)} |G_{\mathbb{R}^{n+1}_{+}} (z,y)| \leq C.\]
Combination of this and \eqref{eq-h-nabla} gives
\[\sup_{\dist(y,E_{\delta}) > \delta/2} \left(\int_{E_{\delta}} t^{1-2s} |\nabla_z H_{\mathcal{C}}(z,y)|^2 dz\right) \le
C \left(\int_{N(E_{\delta}, \delta/4)} t^{1-2s} dz\right) \le C.\]
This concludes the derivation of the desired uniform bound \eqref{eq-appendix-H-pre}.

It remains to take into consideration of the first term of \eqref{eq-U-decom}. We split the term as
\begin{align*}
&\ \int_{\Omega} \nabla_z G_{\mathbb{R}^{n+1}_{+}} (z,y) f(U) (y,0) dy \\
&= \int_{\mathcal{O}(\Omega,2\delta)} \nabla_z G_{\mathbb{R}^{n+1}_{+}} (z,y) f(U)(y,0) dy + \int_{\mathcal{I}(\Omega,{2\delta})} \nabla_z G_{\mathbb{R}^{n+1}_{+}} (z,y) f(U)(y,0) dy\\
&:= A_1 (z) + A_2 (z).
\end{align*}
Take $q > \frac{n}{s}$ and  $r>1$ satisfying $\frac{1}{q}+ \frac{1}{r} =1$. Then
\[|A_{1}(z)| \leq \left(\int_{\mathcal{O}(\Omega,2\delta)} |\nabla_z G_{\mathbb{R}^{n+1}_{+}}(z,y)|^{r} dy \right)^{1 \over r} \| f(U)(\cdot,0)\|_{L^{q}(\mathcal{O}(\Omega, 2\delta))}.\]
In light of the definition of $G_{\mathbb{R}^{n+1}_{+}}$, it holds that
\begin{align*}
\left(\int_{\mathcal{O}(\Omega,2\delta)} |\nabla_z G_{\mathbb{R}^{n+1}_{+}}(z,y)|^{r} dy\right)^{1 \over r} &\leq C \left(\int_{\mathcal{O}(\Omega,2\delta)}  \frac{1}{ |(x-y,t)|^{(n-2s+1)r}} dy\right)^{1 \over r}
\\
& \leq C \max\left\{ t^{{n \over r}-(n-2s+1)}, 1\right\}
= C\max \left\{ t^{- \frac{n}{q} +2s-1}, 1 \right\}.
\end{align*}
Thus we have
\[|A_1 (z)| \leq C \max\left\{ t^{-\frac{n}{q}+2s-1} ,1\right\}\| f(U)(\cdot,0)\|_{L^{q}(\mathcal{O}(\Omega,{2\delta}))}.\]
Using this we see
\begin{equation}\label{eq-appendix-G-1}
\begin{aligned}
\int_{E_{\delta}} t^{1-2s}|A_1(z)|^2 dz &\leq C \int_{0}^{1} \max\left\{ t^{1-2s} t^{-\frac{2n}{q}+4s-2}, t^{1-2s}\right\} \| f(U)(\cdot,0)\|_{L^{q}(\mathcal{O}(\Omega,{2\delta}))}^2 dt
\\
&  = \int_{0}^1 \max\left\{ t^{2s- \frac{2n}{q} -1}, t^{1-2s}\right\} \| f(U)(\cdot,0)\|_{L^{q}(\mathcal{O}(\Omega,{2\delta}))}^2 dt.
\\
& \leq C \|f(U)(\cdot,0)\|_{L^{q}(\mathcal{O}(\Omega,{2\delta}))}^2.
\end{aligned}
\end{equation}
Concerning the term $A_2$, we note that $E_{\delta}$ is away from $\mathcal{I}(\Omega,2\delta) \times \{0\}$. Thus we have
\[\sup_{z \in E_{\delta}, y \in  \mathcal{I}(\Omega,{2\delta})} |\nabla_z G_{\mathbb{R}^{n+1}_{+}} (z,y) |\leq C.\]
Hence
\[|A_2 (z)| \leq C \int_{\mathcal{I}(\Omega,{2\delta})} |f(U)(y,0)|dy, \quad z \in E_{\delta}.\]
Using this we find
\begin{equation}\label{eq-appendix-G-2}
\int_{E_{\delta}}t^{1-2s} |A_2 (z)|^2 dz \leq C \left(\int_{\mathcal{I}(\Omega,{2\delta})} |f(U)(y,0)|dy\right)^2.
\end{equation}
We have obtained the desired bound of $\int_{E_{\delta}} t^{1-2s} |\nabla U|^2 dz$ through the estimates \eqref{eq-appendix-H}, \eqref{eq-appendix-G-1} and \eqref{eq-appendix-G-2}. The proof is complete.
\end{proof}

\section{The proof of Theorem \ref{letn2}}
%In this section we establish the uniform bound $ \int_{\Omega} u^{p+1} dx \leq C(p,\Omega)$ for $u$ satisfying the equation~\ref{a12u}.
In this section, we prove Theorem \ref{letn2}. Here we set the condition
\\
\noindent \textbf{Condition A}:
\begin{eqnarray*}
\liminf_{u \rightarrow \infty} \frac{f(u)}{u} > \lambda_1^{1/2}, \quad \lim_{u \rightarrow \infty} \frac{f(u)}{u^{{(n+1)}/{(n-1)}}} = 0,
\end{eqnarray*}
with one of the following assumptions
\begin{enumerate}
\item $\Omega$ is convex and
\begin{eqnarray}\label{limsu}
\limsup_{n \rightarrow \infty} \frac{ u f(u) -\theta F(u)}{u^2 f(u)^{2/n}} \leq 0, \quad \textrm{ for some}~ \theta \in [0,\frac{2n}{n-2s}).
\end{eqnarray}
\item Condition \eqref{limsu} holds and the function $u \rightarrow f(u) u^{-\frac{n+2s}{n-2s}}$ is nonincreasing on $(0,\infty)$.
\end{enumerate}
 First we obtain a uniform $L^1$ bound away from the boundary and a uniform $L^{\infty}$ bound near the boundary for positive solutions to \eqref{eq-one}. For $r>0$ we let
\begin{eqnarray*}
\mathcal{O}(\Omega,r) = \{ x  \in \Omega \mid \textrm{dist}(x,\partial \Omega) \leq r \}.
\end{eqnarray*}
Then we have the following result.
\begin{lem}\label{uxc}Let $u$ be a $C^2 (\bar{\Omega})$ solution of \eqref{eq-one} with $f$ satsifying
\begin{eqnarray}\label{nfunl}
\liminf_{n \rightarrow \infty} \frac{f(u)}{u} > \lambda_1^{s}.
\end{eqnarray}
%Suppose that $\Omega$ is strictly convex. 
For each $r>0$, there exists a number $C=C(r,\Omega)>0$ such that
\begin{eqnarray}\label{upxc}
\int_{\mathcal{I}(\Omega,r)} f(u) dx \leq C,
\end{eqnarray}and
\begin{eqnarray}\label{xsux}
\sup_{x \in \mathcal{O}(\Omega,r) } u(x) \leq C.
\end{eqnarray}
\end{lem}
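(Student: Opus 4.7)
My plan is to treat the two estimates separately: the weighted integral bound \eqref{upxc} will come from testing the equation against the first Dirichlet eigenfunction $\phi_1$ of $-\Delta$ on $\Omega$, while the pointwise estimate \eqref{xsux} will combine the Cabr\'e--Tan moving plane argument with a Green function representation and bootstrapping from part (a).

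For \eqref{upxc}, the spectral definition of $A_{1/2}$ gives $A_{1/2}\phi_1=\lambda_1^{1/2}\phi_1$. Pairing $A_{1/2}u=f(u)$ with $\phi_1$ and using self-adjointness of $A_{1/2}$ produces
\[
\int_\Omega f(u)\phi_1\,dx \;=\; \lambda_1^{1/2}\int_\Omega u\phi_1\,dx.
\]
The hypothesis \eqref{nfunl} supplies $M,\varepsilon>0$ with $f(u)\ge(\lambda_1^{1/2}+\varepsilon)u$ for $u\ge M$. Splitting both integrals over $\{u\ge M\}$ and $\{u<M\}$ and using the boundedness of $f$ on $[0,M]$, a short algebraic manipulation isolates $\int_\Omega u\phi_1\le C_0$ and hence $\int_\Omega f(u)\phi_1\le C_1$, with constants depending only on $f$ and $\Omega$. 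Since $\phi_1\ge c(s)>0$ on $\Omega\setminus\Omega_s$ by Hopf's lemma and continuity, dividing removes the weight and yields \eqref{upxc}.

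For \eqref{xsux}, I apply the moving plane method of Cabr\'e--Tan \cite{CT} to the harmonic extension of $u$ in $\mathcal{C}$. Strict convexity of $\Omega$ then produces $s_0,\gamma>0$ depending only on $\Omega$ such that every $x\in\Omega_{s_0}$ admits a reflected point $x^*\in\Omega$ with $\textrm{dist}(x^*,\partial\Omega)\ge\gamma$ and $u(x)\le u(x^*)$. This reduces the problem to bounding $u$ uniformly on the fixed interior compact set $K=\{y\in\Omega:\textrm{dist}(y,\partial\Omega)\ge\gamma\}$; for $s>s_0$ one simply enlarges $K$ to absorb the additional strip $\Omega_s\setminus\Omega_{s_0}$, which already sits at positive distance from $\partial\Omega$. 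For $x^*\in K$ I use the Green function representation
\[
u(x^*)=\int_\Omega G_\Omega(x^*,y)\,f(u(y))\,dy,
\]
splitting the integral into a boundary piece over $\Omega_{\gamma/2}$ (where the boundary decay of $G_\Omega(x^*,\cdot)$ together with the weighted bound $\int f(u)\phi_1\le C_1$ from part (a) controls the contribution) and an interior piece over $\Omega\setminus\Omega_{\gamma/2}$ (where Song's bound $G_\Omega(x^*,y)\lesssim|x^*-y|^{-(n-1)}$ and the unweighted $L^1$ bound on $f(u)$ supplied by \eqref{upxc} allow a bootstrap through the $L^p\to L^q$ estimate of Lemma \ref{assum}, terminating at $L^\infty$).

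The main technical obstacle I anticipate is the last step: converting the weighted $L^1$ information of part (a) into a uniform pointwise bound on the interior set $K$. The difficulty is that $\phi_1$ degenerates at $\partial\Omega$ while the singularity of $G_\Omega$ concentrates on the diagonal, so the two regions demand quite different estimates; in particular the interior bootstrap must be iterated finitely often---crucially using the subcriticality $p<(n+1)/(n-1)$ when $f(u)=u^p$---so that the integrability of $u$ on shrunken interior sets reaches an exponent giving $L^\infty$ control.
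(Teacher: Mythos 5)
Your derivation of \eqref{upxc} is exactly the paper's: pair the equation with $\phi_1$, use self-adjointness of $A_{1/2}$ and the superlinearity condition \eqref{nfunl} to bound $\int_\Omega u\phi_1$ and hence $\int_\Omega f(u)\phi_1$, then use $\phi_1\geq c(s)>0$ on $\Omega\setminus\Omega_s$. That part is fine.

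Your treatment of \eqref{xsux} has a genuine gap. The paper's route (and the classical de Figueiredo--Lions--Nussbaum argument, cf.\ \cite[Lemma 13.2]{QS}) is that the moving plane method gives monotonicity of $u$ along a whole \emph{cone} of directions entering $\Omega$ from each boundary point; consequently each $x\in\Omega_{s}$ is dominated by $u(y)$ for all $y$ in a measurable set $I_x\subset\Omega\setminus\Omega_{c}$ of measure bounded below by a constant depending only on $\Omega$, and averaging gives $u(x)\leq |I_x|^{-1}\int_{I_x}u\leq C\int_{\Omega\setminus\Omega_c}u\,dy$, which is bounded by part (a). No pointwise interior bound is needed. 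You instead reduce to bounding $u$ at a single reflected point $x^*$ on a fixed interior compact set, and propose to do this by a Green function representation plus the bootstrap of Lemma \ref{assum} starting from the $L^1$ bound on $f(u)$. This bootstrap does not close: from $u^p\in L^1$, i.e.\ $u\in L^{q_0}$ with $q_0=p$, the iteration $\frac{p}{q_k}-\frac{1}{q_{k+1}}=\frac{1}{n}-\epsilon$ increases the exponent only when $q_k>n(p-1)$, and $q_0=p>n(p-1)$ forces $p<\frac{n}{n-1}$, which excludes the range $\frac{n}{n-1}\leq p<\frac{n+1}{n-1}$ covered by the theorem. The exponent one actually needs to start the bootstrap is $q_0=p+1$ (then $p+1>n(p-1)$ is exactly subcriticality $p<\frac{n+1}{n-1}$), and obtaining $\int_\Omega u^{p+1}\leq C$ is precisely the content of Proposition \ref{1pn1n}, which requires the Pohozaev identity and Lemmas \ref{u0in}--\ref{suppo} --- and which itself uses the present lemma as input. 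So your plan for \eqref{xsux} is both circular in structure and quantitatively fails in the upper part of the subcritical range; replace it with the averaging argument over the reflected set.
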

\begin{proof}
Recall that $\phi_1$ is the eigenfunction of $-\Delta\mid_{\Omega}$ with the smallest eigenvalue $\lambda_1 >0$. Using this and \eqref{eq-one}, we get
\begin{equation}\label{11ud}
\begin{split}
\int_{\Omega} \lambda_1^{s} \phi_1 u (x) dx =&\int_{\Omega} (\mathcal{A}_{s} \phi_1) u (x) dx
\\
=& \int_{\Omega} \phi_1 \mathcal{A}_{s} u (x)dx
\\
=& \int_{\Omega} \phi_1 f(u)(x) dx.
\end{split}
\end{equation}
By the condtion \eqref{nfunl} there are constants $\delta>0$ and $C>0$ such that $f(u) > (\lambda_1^{s} + \delta) u - C$ for all $u>0$. With this, \eqref{11ud} gives
\begin{eqnarray*}
\int_{\Omega} \lambda_1^s \phi_1 u dx > \int_{\Omega} (\lambda_1^s +\delta) u \phi_1 dx - \int_{\Omega} C \phi_1 dx,
\end{eqnarray*}
which yields
\begin{eqnarray}\label{eq-l1}
\int_{\Omega} \phi_1 u dx \leq \frac{1}{\delta} \int_{\Omega} C \phi_1 dx  \leq C(\delta, \Omega,f).
\end{eqnarray}
On the other hand, it is well-known $\phi_1 \geq C$ on $\mathcal{I}(\Omega,r)$ with a constant $C=C(r)$. Combining this and \eqref{eq-l1} we get
\begin{eqnarray}\label{phi1u}
\int_{\Omega \setminus \Omega_d} u dx \leq C \int_{\Omega} \phi_1 u dx \leq C.
\end{eqnarray}
From the identity \eqref{11ud}, we obtain the estimate \eqref{upxc}.
\

When $\Omega$ is strictly convex, the moving plane method in \cite{CT, T1} yields that the solution increases along an arbitrary line  toward inside of $\Omega$ starting from any point on $\partial \Omega$. Given this fact, it is well-known  that the estimate \eqref{phi1u} gives the uniform bound near the boundary (see e.g. \cite[Lemma 13.2]{QS}). 
\

For the general domain without the convexity assumption, we make use of the Kelvin transform of $v$ in the space $\mathbb{R}^{n+1}$. Since $\Omega$ is smooth, for a point $x_0$ we can find a ball which contact $x_0$ from the exterior of $\Omega$. We may assume $x_0 =1$ and the ball is $B(0,1)$ without loss of generality. Set
\begin{eqnarray*}
w (z) = |z|^{2s-n} v \left( \frac{z}{|z|^2}\right).
\end{eqnarray*}
Then, $w$ satisfies
\begin{eqnarray*}
\left\{\begin{array}{ll}
\textrm{div}(t^{1-2s} \nabla w) = 0 &\quad \textrm{in}~ \kappa(\mathcal{C}),
\\
w>0&\quad \textrm{in}~ \kappa(\mathcal{C}),
\\
w =0 & \quad \textrm{on} ~\kappa(\partial \Omega \times [0,\infty)),
\\
\partial_{\nu}^s w = g(y,w) &\quad \textrm{on}~ \kappa (\Omega \times\{0\}),
\end{array}\right.
\end{eqnarray*}
where $g(y,w):= f(|y|^{n-2s} w)/ |y|^{n+2s}$. For $\lambda >0$ we set
\begin{itemize}
\item $D_{\lambda} = \kappa(\mathcal{C}) \cap \{ z \in \mathbb{R}^{n+1}_{+}: |z| \leq 1,~ z_1 > 1-\lambda\}$,
\item $\tilde{\partial}D_{\lambda} = D_{\lambda} \cap \partial \mathbb{R}_{+}^{n+1}$,
\item $T_{\lambda}(y)= ( 2-2\lambda - y_1, y_2,\cdots,y_{n+1})$.
\end{itemize}
Let $w_{\lambda} (y) = w(T_{\lambda}(y))$ and $\zeta_{\lambda}=w_{\lambda}-w$ defined on $D_{\lambda}$. We claim that $v_{\lambda} \geq 0$ if $\lambda >0$ is sufficiently small. Set $v_{\lambda}^{-} = \max \{0, - v_{\lambda}\}$. Then,
\begin{equation}
\begin{split}
 0 &= \int_{D_{\lambda}} \zeta_{\lambda}^{-} \textrm{div}(t^{1-2s}\nabla \zeta_{\lambda} ) dx dy
\\
&= \int_{\tilde{\partial}D_{\lambda}} \zeta_{\lambda}^{-} \partial_{\nu}^s \zeta_{\lambda} dx + \int_{D_{\lambda}}t^{1-2s} |\nabla \zeta_{\lambda}^{-}|^2 dx dy.
\end{split}
\end{equation}
We have
\begin{equation}
\begin{split}
\int_{\tilde{\partial}D_{\lambda}} (-\zeta_{\lambda}^{-})\partial_{\nu}^s \zeta_{\lambda} dx &= \int_{\tilde{\partial}D_{\lambda}}(-\zeta_{\lambda}^{-}) ( g(T_{\lambda}x, w_{\lambda}) - g(x,w)) dx
\\
& = \int_{\tilde{\partial}D_{\lambda} \cap \{w_{\lambda} \leq w\}} (w -w_{\lambda}) ( g(x,w) -g(T_{\lambda} x, w_{\lambda})) dx
\\
\end{split}
\end{equation}
Since $u \rightarrow f(u) u^{-\frac{n+2s}{n-2s}}$ is nonincreasing, we see that $g(x,w) \leq g(T_{\lambda} x, w_{\lambda})$ because $|x| \geq |T_{\lambda}(x)|$. Using this we deduce that
\begin{equation}
\begin{split}
\int_{D_{\lambda}} t^{1-2s} |\nabla \zeta_{\lambda}^{-}|^2 dx dy &\leq \int_{\tilde{\partial}D_{\lambda} \cap \{w_{\lambda} \leq w\}} (w-w_{\lambda})(g(x,w)-g(x,w_{\lambda}))dx
\\
&\leq \int_{\tilde{\partial}D_{\lambda} \cap \{w_{\lambda} \leq w\}} (w-w_{\lambda})^2 h(x,w,w_{\lambda}) dx
\\
&= \int_{\tilde{\partial}D_{\lambda} \cap \{w_{\lambda} \leq w\}} (\zeta_{\lambda}^{-})^2 h(x,w,w_{\lambda}) dx,
\end{split}
\end{equation}
where $h(x,w,w_{\lambda})= \frac{g(x,w)-g(x,w_{\lambda})}{w-w_{\lambda}}$. Since $f$ is locally Lipschitz it is bounded by  $\sup_{\tilde{\partial}D_{\lambda}} [|w|+|w_{\lambda}|]$.
By H\"older's inequality we deduce that
\begin{equation}\label{dlzl2}
\begin{split}
\int_{D_{\lambda}} t^{1-2s} |\nabla \zeta_{\lambda}^{-}|^2 dx dy &\leq C \int_{\tilde{\partial}D_{\lambda} \cap \{w_{\lambda} \leq w\}} (\zeta_{\lambda}^{-})^2 dx
\\
&\leq C |\tilde{\partial}D_{\lambda} \cap \{w_{\lambda} \leq w\}|^{2s/n} \| \zeta_{\lambda}^{-} (\cdot,0)\|_{L^{2n/(n-2s)}(\Omega)}^2.
\end{split}
\end{equation}
Using the trace inequality, we get
\begin{eqnarray*}
\| \zeta_{\lambda}^{-} (\cdot,0)\|_{L^{2n/(n-2s)}(\Omega)} \leq C |\tilde{\partial}D_{\lambda} \cap \{w_{\lambda} \leq w\}|^{2s/n} \| \zeta_{\lambda}^{-} (\cdot,0)\|_{L^{2n/(n-2s)}(\Omega)}^2,
\end{eqnarray*}
which yields that $\zeta_{\lambda}^{-} \equiv 0$ for small $\lambda$.
\

Now we set
\begin{eqnarray*}
\eta = \sup \{ \lambda>0 : T_{\lambda} (D_{\lambda}) \subset \kappa (\mathcal{C})\},
\end{eqnarray*}
and
\begin{eqnarray*}
S:= \left\{ 0 <\lambda \leq \frac{\eta}{2} : \zeta_{\lambda} \geq 0 \quad \textrm{on}~D_{\lambda}\right\} \cup \{0\}.
\end{eqnarray*}
We shall prove that $S=[0,\eta/2]$. Since $\zeta_{\lambda}$ is a continuous function of $\lambda$, the set $S$ is closed. Thus, it is enough to show that $S$ is also open in $[0,\eta/2]$. Note that the constant $C$ in the inequality \eqref{dlzl2} can be chosen uniformly for $\lambda \in [0,\eta/2]$ since $\sup_{0<\lambda <\eta/2} \sup_{\tilde{\partial}D_{\lambda}} [|w|+|w_{\lambda}|]$ is bounded.
\

Choose any $0< \lambda_0 < \eta/2$ contained in $S$. Then we have $\zeta_{\lambda_0} \geq 0$. Since $\zeta_{\lambda_0} >0$ on $\kappa (\partial \Omega \times [0,\infty)) \cap D_{\lambda_0}$ and $\textrm{div}(t^{1-2s} \nabla \zeta_{\lambda_0} )\equiv 0$ in $D_{\lambda_0}$, we see that $\zeta_{\lambda_0} >0$ in $D_{\lambda_0}$ by the maximum principle (see e.g. \cite{CS2}). Thus we can find $c>0$ such that
\begin{eqnarray*}
|D_{\lambda_0,c} := \{x \in D_{\lambda_0} : \zeta_{\lambda_0} >c\}| \geq |D_{\lambda_0}| - \delta/2.
\end{eqnarray*}
By continuity, there is $\epsilon >0$ such that $\zeta_{\lambda} > \frac{c}{2}$ and $|D_{\lambda} \setminus D_{\lambda_0}| < \frac{\delta}{2}$ for $\lambda \in [\lambda_0, \lambda_0 + \epsilon)$. For such $\lambda$ we then see that
\begin{eqnarray*}
|\left\{ x \in D_{\lambda}: \zeta_{\lambda} > \frac{c}{2} \right\} | \geq |D_{\lambda} |-\frac{\delta}{2}-\frac{\delta}{2} = |D_{\lambda}| -\delta.
\end{eqnarray*}
This  yields that
\begin{eqnarray*}
| \{ x \in D_{\lambda}: \zeta_{\lambda} \leq 0\}| \leq \delta.
\end{eqnarray*}
Then the inequality \eqref{dlzl2} implies that $\zeta_{\lambda} \geq 0$ for $\lambda \in [\lambda_0,\lambda_0+\epsilon)$. Therefore we have that $w$ increases in any line in $\Omega$ starting from a boundary point. Since $w(x) \geq w(y)$ we deduce that $u(x/|x|^2) \geq c u(y/|y|^2)$ holds with some $c \in (0,1)$ uniformly for $(x,y)$ satisfying $\min (|x|,|y|) > 1/2$. Then we can obtain the $L^{\infty}$ bound near the boundary $\partial \Omega$. It completes the proof.
\end{proof}

\begin{prop}\label{1pn1n} Suppose that $1 < p < \frac{n+2s}{n-2s}$ and let $u \in C^2 (\bar{\mathcal{C}})$ be a solution of the equation \eqref{eq-one} with $f(u)=u^p$. Then there exists a constant $C= C(p,\Omega) >0$ such that
\begin{eqnarray*}
\int_{\Omega} u^{p+1} (x) dx \leq C.
\end{eqnarray*}
Moreover, in the general case  of Theorem \ref{eq-one}, there exists a constant $C=C(f,\Omega) >0$ such that
\begin{eqnarray*}
\int_{\Omega\times \{0\}} \left\{ n F(v) -\frac{n-2s}{2} v f(v) \right\} dx \leq C,
\end{eqnarray*}
where $F(v):= \int_0^{v} f(s) ds$.
\end{prop}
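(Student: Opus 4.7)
The plan is to apply the Pohozaev identity \eqref{12lc} of Tan, observe that the coefficient $\frac{n}{p+1} - \frac{n-1}{2}$ on the right is strictly positive precisely because $p < (n+1)/(n-1)$, and thereby reduce the claim to a uniform upper bound on $\int_{\partial_L \mathcal{C}} |\nabla v|^2 (z,\nu)\, d\sigma$. The splitting $v = v_1 + v_2$ that the author has just set up before \eqref{lcu2}, together with Lemmas \ref{u0in} and \ref{suppo}, is tailor-made to furnish exactly that bound.

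Concretely, I would fix a small $\delta > 0$ and a smooth cutoff $\chi$ with $\chi \equiv 1$ on $\Omega \setminus \Omega_{2\delta}$ and $\supp \chi \subset \Omega \setminus \Omega_\delta$, giving $v_1, v_2$ as defined. By \eqref{lcu2} it suffices to estimate the two boundary integrals separately. For $v_2$, the Neumann datum $f(u)(1-\chi)$ is supported in $\Omega_{2\delta}$, where Lemma \ref{uxc} provides an $L^\infty$ bound on $u$; hence $\|f(u)(1-\chi)\|_{L^2(\Omega)} \lesssim 1$ and Lemma \ref{u0in} controls $\int_{\partial_L \mathcal{C}} |\partial v_2/\partial \nu|^2 (z,\nu)\, d\sigma$. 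For $v_1$, the datum $f(u)\chi$ is supported in $\Omega \setminus \Omega_\delta$, where the same lemma gives $\int f(u)\chi \lesssim 1$; Lemma \ref{suppo} (applied with $s=\delta$) then bounds $\int_{\partial_L \mathcal{C}} |\partial v_1/\partial \nu|^2 (z,\nu)\, d\sigma$ by the square of this $L^1$-norm, hence by a constant. Feeding these two estimates back through \eqref{lcu2} and \eqref{12lc} produces the asserted bound on $\int_\Omega u^{p+1}\,dx$. The general case follows the same template with the general Pohozaev identity having $nF(v) - \frac{n-1}{2} v f(v)$ in place of the pure power expression; Lemma \ref{uxc} applies to any $f$ satisfying \eqref{nfunl}, and the two lemmas used here are linear and so insensitive to the form of $f$.

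The conceptual obstacle that the proof has to overcome, and which is what makes the above splitting necessary, is that the left-hand side of \eqref{12lc} is intrinsically nonlocal in $u$: the harmonic extension $v$ depends on the values of $u$ on all of $\Omega$, so unlike in the local case of \eqref{ufuin} one cannot hope to bound it using $L^\infty$ control of $u$ near $\partial\Omega$ alone. The cutoff decomposition separates the boundary and interior contributions so that each can be attacked with the weaker information actually available on its side: $L^\infty$ control near $\partial\Omega$ (via Lemma \ref{u0in}) and merely $L^1$ control in the interior (via Lemma \ref{suppo}, whose proof exploits the $|z-y|^{-(n-1)}$ off-diagonal decay of the Dirichlet--Neumann Green function $G_1$ dominated by the half-space Neumann Green function). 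I expect the bookkeeping with $\chi$ and the application of the two lemmas to be mechanical; the nontrivial work has already been done in establishing those lemmas.
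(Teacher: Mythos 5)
Your proposal is correct and follows essentially the same route as the paper: split $v$ by a cutoff $\chi$ separating a neighborhood of $\partial\Omega$ (where Lemma \ref{uxc} gives $L^\infty$, hence $L^2$, control of the Neumann datum, handled by Lemma \ref{u0in}) from the interior (where only the $L^1$ bound on $f(u)$ is available, handled by Lemma \ref{suppo}), combine via \eqref{lcu2}, and conclude from the Pohozaev identity \eqref{12lc} using the positivity of $\frac{n}{p+1}-\frac{n-1}{2}$. The only caveat, which the paper also makes explicit, is that this argument presupposes $\Omega$ strictly convex so that Lemma \ref{uxc} applies; the non-convex case is deferred to the Kelvin-transform argument in the proof of Theorem \ref{letn2}.
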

%We are now ready to establish the bound for the $L^{p+1}$-norm of the solution $u$.
\begin{proof}
In this proof, we assume that $\Omega$ is strictly convex. The general case will be proved in the last part of the proof of \mbox{Theorem \ref{letn2}.}
\

We make use of \mbox{Lemma \ref{uxc}} to get a number $\delta >0$ and a constant $C=C(\delta,\Omega)>0$ so that
\begin{eqnarray}\label{xsuxcc}
\sup_{\mathcal{O}(\Omega,\delta)} u(x) \leq C,
\end{eqnarray}
and
\begin{eqnarray}\label{supc}
\int_{\mathcal{I}(\Omega,\delta)} f(u) (x) dx \leq C.
\end{eqnarray}
We apply these estimates to the inequality \eqref{eq_local_poho} . Then we obtain
\begin{equation}
\min_{r \in [\delta,2\delta]} \left| \int_{\Omega \setminus \Omega_{r/2}} n F(U) - \left(\frac{n-2s}{2}\right)  U f(U) dx\right| \leq C.
\end{equation}
The proof is completed.
\end{proof}

\begin{rem}
In the local problem $-\Delta u = u^p$ in $\Omega$, $u=0$ on $\partial \Omega,$ with $1<p< \frac{n+2}{n-2}$, given the $L^{\infty}$ bound \eqref{xsux} of a solution $u $near the boundary, one can use $W^{1,p}$ regularity estimate on $\mathcal{O}(\Omega,\delta)$  to get the $L^{\infty}$ estimates of $\left| \nabla u \right|$ on the  $\mathcal{O}(\Omega,{\delta/2})$. Then, for $f(u)=u^p$ and $p < \frac{n+2}{n-2}$, the Pohozaev identity
\begin{eqnarray*}
\int_{\partial \Omega} \left|\frac{\partial u }{\partial n}\right|^2 (x,\nu) d\sigma = \left( \frac{n}{p+1} - \frac{n-2}{2}\right) \int_{\Omega} u^{p+1} dx
\end{eqnarray*}
gives a uniform bound of $\int_{\Omega} u^{p+1} dx$. Then using the Sobolev embeddings iteratively we can get the uniform bound of $\|u\|_{L^{\infty}(\Omega)}$. This is not applicable to our problem \eqref{eq-one} because the Pohozaev identity is given on the extended domain $\Omega \times [0,\infty)$ as follows (see \cite[Lemma 3.1]{T})
\begin{eqnarray}\label{12lc}
\frac{1}{2} \int_{\partial_L \mathcal{C}} t^{1-2s} |\nabla U|^2 (z, \nu) d \sigma = \left(\frac{n}{p+1} - \frac{n-2s}{2}\right) \int_{\Omega \times \{0\}} |U|^{p+1} dx,
\end{eqnarray}
where $U$ is the harmonic extention of $u$. In this case the left-hand side would not be bounded by using only the $L^{\infty}$ estimate of $u(x)= U(x,0)$ near $\partial {\Omega}$ since the harmonic extension $U(z)$ is made of all values of $u(x)$ for $x \in \Omega$. This is the reason that we rely on the estimates of Proposition \ref{prop-sub-estimate}
\end{rem}

We are now in a position to prove our main theorem.
\begin{proof}[Proof of Theorem \ref{letn2}] For the sake of simplicity, first we prove the theorem for $f(u)=u^p$.
Since $p < \frac{n+2s}{n-2s}$ we get $ q_1 >p$ for $ \frac{p}{p+1} - \frac{1}{q_1} = \frac{2s}{n} -\epsilon$ with sufficiently small $\epsilon >0$. Using Lemma \ref{lem-trace} we get
\begin{eqnarray*}
\| u\|_{q_1} \leq C \| \mathcal{A}_s u \|_{\frac{p+1}{p}} \leq C  \|u^p \|_{\frac{p+1}{p}} \leq C.
\end{eqnarray*}
For $k \geq 1$, we define $q_k$ by the relation $\frac{p}{q_k} - \frac{1}{q_{k+1}} = \frac{2s}{n} -\epsilon$ and stop the sequence when we have $\frac{p}{q_N} < \frac{2s}{n} - \epsilon$. Then, using Lemma \ref{lem-trace}, for $k=1,\cdots, N-1$, we have
\begin{eqnarray*}
\| u\|_{q_{k+1}} \leq C \| \mathcal{A}_s u \|_{\frac{q_k}{p}} \leq C  \|u^{p}\|_{\frac{q_k}{p}} \leq C  1.
\end{eqnarray*}
We then have $\| u\|_{q_N} \leq C 1$, and use Lemma \ref{lem-trace} again to  deduce  $\| u\|_{L^{\infty}}\leq C 1$. It completes the proof when $\Omega$ is convex and $f(u) = u^p$, $p< \frac{n+2s}{n-2s}$.
\

Now we shall prove the theorem for general function $f$ satisfying \textbf{Condition A}. We first see from Proposition \ref{1pn1n} that
\begin{eqnarray}\label{fvdxc}
\int_{\Omega \times \{0\}} \left\{ n F(v) -\frac{n-2s}{2} v f(v) \right\} dx \leq C.
\end{eqnarray}
From the condition \eqref{limsu}, for any $\epsilon>0$, we can find $C_{\epsilon}>0$ such that
\begin{eqnarray}\label{fukce}
uf(u) \leq \theta F(u) + \epsilon u^2 f(u)^{2s/n} + C_{\epsilon}.
\end{eqnarray}
In what follows, $C_{\epsilon}$ may be chosen differently in each line. Using H\"older's inequality and the Sobolev embedding we deduce that
\begin{eqnarray}\label{12u22}
\int_{\Omega} u^2 |f(u)|^{\frac{2s}{n}} dx \leq \| u\|_{\frac{2n}{n-2s}(\Omega)}^{2} \| f(u)\|_{L^1(\Omega)}^{2s/n} \leq C\| \mathcal{A}_{s}^{1/2} u \|_2^2,
\end{eqnarray}
and we have
\begin{equation}\label{12u222}
\begin{split}
\int_{\Omega} u f(u) dx &= \int_{\Omega} u \mathcal{A}_{s} u dx =\int_{\Omega} \mathcal{A}_{s}^{1/2}u \cdot \mathcal{A}_{s}^{1/2} u dx
= \| \mathcal{A}_{s}^{1/2} u\|_{2}^{2}.
\end{split}
\end{equation}
From \eqref{fvdxc} and \eqref{fukce}, we can deduce that
\begin{eqnarray*}
\left( \frac{n}{\theta} -\frac{n-2s}{2}\right) \int_{\Omega} u f(u) dx \leq \frac{\epsilon}{\theta} \int_{\Omega} u^2 f(u)^{\frac{2s}{n}} dx + C_{\epsilon}.
\end{eqnarray*}
Choose $\epsilon =\epsilon (\theta, n) >0$ small enough so that $\left( \frac{n}{\theta} -\frac{n-2s}{2}\right) > \frac{\epsilon}{\theta}$. Then combining  \eqref{12u22} and \eqref{12u222} with the above inequality yields for a constant $C=C(\theta,n)>0$ we have
\begin{equation}
\left(\frac{n}{\theta} -\frac{n-2s}{2}\right)\|\mathcal{A}_s^{1/2} u\|_{2}^2 \leq C\frac{\epsilon}{\theta} \|A_s^{1/2} u\|_2^2 + C_{\epsilon},
\end{equation}
which implies
\begin{eqnarray}\label{eq-A-C}
\| \mathcal{A}_{s}^{1/2} u \|_{2}^{2} \leq C.
\end{eqnarray}
Let $p>1$ and $q = (p+1)\frac{n}{n-2s}$. Then
\begin{equation}\label{pdxce}
\begin{split}
\left( \int_{\Omega} u^{q} dx \right)^{\frac{n-2s}{n}} &= \|u^{(p+1)/2}\|_{\frac{2n}{n-2s}}^2
\\
& \leq C \int_{\Omega \times (0,\infty)} |\nabla u^{\frac{(p+1)}{2}}|^2 dx
=C_p \int_{\Omega \times (0,\infty)} \nabla u \cdot \nabla (u^p) dx
\\
&= C_p \int_{\Omega} \frac{\partial u}{\partial \nu} \cdot u^p dx
\\
& \leq \epsilon C_p \int_{\Omega} u^{\frac{n+2s}{n-2s}} u^p dx + C_{\epsilon}.
\end{split}
\end{equation}
Since $p+1 = \frac{n-2s}{n} q$ we have
\begin{equation}
\begin{split}
\int_{\Omega} u^{\frac{n+2s}{n-2s} } u^p dx &= \int_{\Omega} u^{q (n-2s)/n} u^{\frac{2}{n-2s}} dx
\\
&\leq \left( \int_{\Omega} u^{q(n-2s)/n \cdot \frac{n}{n-2s}} dx \right)^{\frac{n-2s}{n}} \left( \int_{\Omega} u^{\frac{2}{n-2s}\cdot n} dx\right)^{\frac{2s}{n}}
\\
&\leq C\left( \int_{\Omega} u^{q(n-2s)/n \cdot \frac{n}{n-2s}} dx \right)^{\frac{n-2s}{n}} \|\mathcal{A}_{s}^{1/2} u\|_{2}^{\frac{2}{n-2s}}
\\
& \leq C\left( \int_{\Omega} u^{q} dx \right)^{\frac{n-2s}{n}},
\end{split}
\end{equation}
where we used \eqref{eq-A-C} in the last estimate. Combinig this with  \eqref{pdxce} yields that
\begin{eqnarray*}
\left(\int_{\Omega} u^q dx \right)^{1/q} \leq C_{\epsilon},
\end{eqnarray*}
 Since $p$ is an arbitrary number, we can  use Lemma \ref{lem-trace} to conclude that
\begin{eqnarray*}
\| u\|_{L^{\infty}} \leq C_{\epsilon}.
\end{eqnarray*}
It completes the proof.
\end{proof}

\section{The Lane-Emden system}
In this section, we study the Lane-Emden system involving the square root of the Laplacian
\begin{eqnarray}\label{12uv}
\left\{ \begin{array}{ll} A_{s} u = v^p & \quad \textrm{in} ~\mathcal{C},
\\
A_{s} v = u^{q}& \quad \textrm{in} ~\mathcal{C},
\\
u >0,~ v>0 &\quad \textrm{in}~ \mathcal{C},
\\
u=v =0& \quad \textrm{on}~ \partial\mathcal{C}.
\end{array}\right.
\end{eqnarray}
We shall also denote by $u$ and $v$  the harmonic extensions of $u$ and $v$. Then, we have
\begin{eqnarray}\label{eq-ext-sys}
\left\{ \begin{array}{ll} \textrm{div}(t^{1-2s}\nabla u) =\textrm{div}(t^{1-2s}\nabla v) =0 &\quad \textrm{in} ~\mathcal{C},
\\
u =v= 0 & \quad \textrm{on} ~\partial_L \mathcal{C},
\\
\partial_{\nu}^s u = v^{p}, ~\partial_{\nu}^s v = u^{q} &\quad \textrm{on}~ \Omega \times\{0\},
\\
u>0,~ v>0 &\quad \textrm{in} ~\mathcal{C}.
\end{array}\right.
\end{eqnarray}
First, the existence of weak solution and Brezis-Kato type estimate will follow from the same proof of \cite{HV}. We shall obtain a Pohozaev type identity, which proves nonexistence of nontrivial solutions for the system \eqref{12uv} in critical and supercritical cases. Next, we shall establish a moving plane argument. Then, we shall obtain the \emph{a priori} estimate for subcritical cases by applying the framework which was used in the proof of Theorem \ref{letn2}.
\

The existence result follows by applying the proof of \cite[Theorem 1]{HV}. with minor modifications. The proof uses the following result of Benci-Rabinowitz \cite{BR}.
\begin{thm}[Indefinite Functional Theorem]\label{thm-BR} Let $H$ be a real Hilbert sapce with $H = H_1 \oplus H_2$. Suppose $\mathcal{L} \in C^1 (H,\mathbb{R})$ satisfies the Palais-smale condition, and
\begin{enumerate}
\item $\mathcal{L}(u) =\frac{1}{2}(Lu, u)_{H} - \mathcal{H}(u),$ where $L:H\rightarrow H$ is bounded and self-adjoint, and $L$ leaves $H_1$ and $H_2$ invariant;
\item $\mathcal{H}'$ is compact;
\item there exists a subspace $\bar{H} \subset H$ and sets $S \subset H$, $Q \subset \bar{H}$ and constants $\alpha >\omega$ such that
\begin{enumerate}
\item $S \subset H_1$ and $\mathcal{L}\mid_{S} \geq \alpha,$
\item $Q$ is bounded and $\mathcal{L} \leq \omega$ on the boundary $\partial Q$ of $Q$ in $\bar{H}$,
\item $S$ and $\partial Q$ link.
\end{enumerate}
\end{enumerate}
Then $\mathcal{L}$ possesses a critical value $c \geq \alpha$.
\end{thm}
We set
\begin{itemize}\item $E^a (\Omega) = H^a (\Omega) \times H^{2s-a} (\Omega),$ \quad $0<a<2s$.
\item $E^{\pm} = \{ (u, \pm(-\Delta)^{a-2s} u): u \in H^a (\Omega)\}$.
\end{itemize}
We then have
\begin{equation}
E^a (\Omega) = E^{+} \oplus E^{-} = \{ u=u^{+}+ u^{-}, ~u^{\pm} \in E^{\pm}\}.
\end{equation}
We easily see that $E^{\pm}$ have their orthonormal basis
\begin{equation}
\left\{ \frac{1}{\sqrt{2}} (\lambda_k^{-a/2}\phi_k, \pm \lambda_k^{a/2 -1}\phi_k ) : k=1,2,\cdots \right\}.
\end{equation}
Let
\begin{equation}
L =\left( \begin{array}{ll} 0 & (-\Delta)^{2s-a} \\ (-\Delta)^{a-2s} &0 \end{array}\right).
\end{equation}
Then,
\begin{equation}
A(u) =\frac{1}{2} \langle (-\Delta)^s u, u \rangle = \frac{1}{2} (Lu, u)_{E^r}.
\end{equation}

\begin{proof}[Proof of Theorem \ref{thm-ex}]
We apply Theorem \ref{thm-BR} with the spaces $H= E^{r}(\Omega)$, $H_1 = E^{+}$, and $H_2 = E^{-}$. In this setting, it follows from the proof of \cite[Theorem1]{HV} with minor changes that the conditions (1)-(3) of Theorem \ref{thm-BR} are satisfied. Then the existence of a weak solution $(u,v)$ follows. We omit the detail for the simplicity of exposition. The difference of the range of $(p,q)$ is due to the different ranges of the Sobolev inequalities.
\end{proof}
%\begin{thm} Assume $(p,q)$ is sub-critical or critical.
%Let $(u,v)$ be a weak  solution of the system \eqref{12uv}. Then, $u \in C^{2,\alpha}$ and $v\in C^{2,\alpha}$ for some $0< \alpha<1$.
%\end{thm}
We have the following Brezis-Kato type result.
\begin{prop}Assume that $(p,q)$ is critical or sub-critical. Let $(u,v)$ be a weak solution of \eqref{eq-sys}. Then we have $u  \in L^{\infty}(\Omega)$ and $v \in L^{\infty}(\Omega)$. 
\end{prop}
\begin{proof}
Note that
\begin{equation}
\left\{\begin{array}{ll} \mathcal{A}_s v = a(x) u &\quad \textrm{in}~\Omega,
\\
\mathcal{A}_s u = b(x) v&\quad \textrm{in}~\Omega.
\end{array}
\right.
\end{equation}
Since $a(x) \in L^{\frac{p+1}{p-1}} (\Omega)$ we have
\begin{equation}
a(x)u(x) = q_{\epsilon}(x) u(x) + f_{\epsilon}(x),
\end{equation}
where $ f_{\epsilon} \in L^{\infty}(\Omega)$ and $\|q_{\epsilon}\|_{\frac{p+1}{p-1}(\Omega)} < \epsilon.$ We have
\begin{equation} 
u(x) = (\mathcal{A}_s)^{-1}(b v)(x).
\end{equation}
Hence,
\begin{equation}
v = (\mathcal{A}_s)^{-1} \left[ q_{\epsilon} (\mathcal{A}_s)^{-1}(b v)\right] + (\mathcal{A}_s)^{-1} f_{\epsilon}(x).
\end{equation}
From Lemma \ref{lem-trace} and H\"older's inequality, we have the following embedding properties of linear maps; Fix $\alpha>1$, then
\begin{itemize}
\item $w \rightarrow b(x)w$ is bounded form $L^{\alpha}(\Omega)$ to $L^{\beta}(\Omega)$ for
\begin{equation}
\frac{1}{\beta} = \frac{q-1}{q+1} +\frac{1}{\alpha}.
\end{equation}
\item $w \rightarrow (\mathcal{A}_s)^{-1} w$ is bounded from $L^{\beta}(\Omega)$ to $L^{\gamma}(\Omega)$ for 
\begin{equation}
2s= n \left(\frac{1}{\beta}-\frac{1}{\gamma}\right).
\end{equation}
\item $w \rightarrow q_{\epsilon}(x) w$ is bounded from $L^{\gamma}$ to $L^a$ with the norm $\| q_{\epsilon}\|_{L^{\frac{p+1}{p-1}}(\Omega)}$ for
\begin{equation}
\frac{1}{a} = \frac{p-1}{p+1} + \frac{1}{\gamma}.
\end{equation}
\item $w \rightarrow (\mathcal{A}_s)^{-1} w$ is bounded from $L^{a}(\Omega)$ to $L^{b}(\Omega)$ for
\begin{equation}
2s = n \left(\frac{1}{a}-\frac{1}{b}\right).
\end{equation}
\end{itemize}
Combining these facts, we see that  the map $w \rightarrow (\mathcal{A}_s)^{-1} \left[ q_{\epsilon} (\mathcal{A}_s)^{-1}(b w)\right]$ is bounded from $L^{\alpha}(\Omega)$ to $L^{\alpha}(\Omega)$ for any $\alpha >1$. Thus, 
\begin{equation}\label{eq-bre}
\begin{split}
\| v\|_{L^{\alpha}} &\leq \| (-\Delta)^{-1}\left[q_{\epsilon} (\mathcal{A}_s)^{-1} (bv)\right]\|_{L^{\alpha}(\Omega)} + \|(\mathcal{A}_s)^{-1} f_{\epsilon}\|_{L^{\alpha}(\Omega)}
\\
&\leq C\|q_{\epsilon}\| \|v\|_{L^{\alpha}(\Omega)} +  \|(\mathcal{A}_s)^{-1} f_{\epsilon}\|_{L^{\alpha}(\Omega)}.
\end{split}
\end{equation}
Since $\|q_{\epsilon} \|_{\frac{p+1}{p-1}} \leq \epsilon$, we can deduce from \eqref{eq-bre} that $\|v\|_{L^{\alpha}} \leq C$ for some $C>0$. Then using Lemma \ref{lem-trace} we  deduce that $u \in L^{\infty}(\Omega)$. From this we also get $v \in L^{\infty}(\Omega)$. The lemma is proved
\end{proof}
Now we recall the regularity result form \cite{CS, T1}. Consider weak solution $U \in H_{0,L}^s (\mathcal{C}) \cap L^{\infty}(\mathcal{C})$ to the problem
\begin{equation}
\left\{ \begin{array}{ll}
\textrm{div}(t^{1-2s} \nabla U) =0&\quad \textrm{in}~\mathcal{C},
\\
U=0&\quad \textrm{on}~\partial_L \mathcal{C},
\\
\partial_{\nu}^s U (x,0) = g(x) &\quad \textrm{on}~\Omega \times \{0\}.
\end{array}
\right.
\end{equation}
Then, for $g \in C^{\alpha}(\Omega)$, we have
\begin{equation}
\left\{ \begin{array}{ll} 
v \in C^{\alpha+2s} (\Omega)&\quad \textrm{if} ~\alpha +2s <1,
\\
v \in C^{1,\alpha+2s-1} (\Omega)&\quad \textrm{if}~ \alpha +2s  \geq 1.
\end{array}
\right.
\end{equation}
Using this result iteratively, we can prove the following result.
\begin{prop}
Let $(u,v)$ is a weak solution of \eqref{eq-sys} such that $u \in H^{s_1}(\Omega) \cap L^{\infty}(\Omega)$ and $v \in H^{s_2}(\Omega) \cap L^{\infty}(\Omega)$ for some $s_1>0$ and $s_2 >0$. Then it holds that $u \in C^{1,\alpha}(\bar{\Omega})$ and $v \in C^{1,\alpha} (\bar{\Omega})$ for any $\alpha \in (0,1)$. 
\end{prop}

We shall obtain a Pohozaev type identity for the system \eqref{eq-ext-sys}. It will gives the nonexistence result for the critical and supercritical cases.
\begin{thm}
Suppose that $(u,v) \in C^{2} (\bar{\mathcal{C}}) \times C^{2} (\bar{\mathcal{C}})$ satisfies
\begin{eqnarray}\label{uvoi}
\left\{\begin{array}{ll}
\textrm{div}(t^{1-2s}) u = \textrm{div}(t^{1-2s}\nabla v) = 0 &\quad \textrm{in}~ \mathcal{C},
\\
u=v=0&\quad \textrm{on} ~\partial_L \mathcal{C}.
\end{array}
\right.
\end{eqnarray}
Then we have
\begin{equation}\label{lcxnu}
\begin{split}
& \int_{\partial_{L} \mathcal{C}} t^{1-2s} (z \cdot \nu) \frac{\partial u}{\partial \nu} \frac{\partial v}{\partial \nu} d\sigma
\\
&\quad\quad = -\int_{\Omega \times \{y=0\}} [ (x, \nabla_x v)\partial_{\nu}^s u + (x, \nabla_x u) \partial_{\nu}^s v ] dx - (n-2s) \int_{\mathcal{C}} t^{1-2s} \nabla u \cdot \nabla v dx.
\end{split}
\end{equation}
\end{thm}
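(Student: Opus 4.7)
The plan is to derive a vector-field identity of Pohozaev type via the multiplier method, apply the divergence theorem on the truncated cylinder $\Omega\times(0,R)$, and then pass to the limit $R\to\infty$ using the same ``good slice'' trick that appears in the proof of Lemma \ref{u0in}. Since $u$ and $v$ are coupled symmetrically, the natural multipliers are $(z\cdot\nabla u)\nabla v$ and $(z\cdot\nabla v)\nabla u$, rather than a single $(z\cdot\nabla w)\nabla w$ as in Tan's identity \eqref{12lc}.

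First, using $\Delta u=\Delta v=0$, a direct computation (expanding $\partial_i((z\cdot\nabla u)\partial_i v)$ and its symmetric counterpart, then writing $z\cdot\nabla(\nabla u\cdot\nabla v)=\mathrm{div}(z(\nabla u\cdot\nabla v))-(n+1)(\nabla u\cdot\nabla v)$) yields the pointwise identity
\begin{equation*}
\mathrm{div}\bigl\{(z\cdot\nabla u)\nabla v+(z\cdot\nabla v)\nabla u - z\,(\nabla u\cdot\nabla v)\bigr\}=-(n-1)\,(\nabla u\cdot\nabla v).
\end{equation*}
Integrating this over $\Omega\times(0,R)$ and applying the divergence theorem produces three boundary contributions: the lateral part $\partial\Omega\times(0,R)$, the bottom $\Omega\times\{0\}$, and the top $\Omega\times\{R\}$.

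Next, I analyse each face. On the lateral boundary $\partial_L\mathcal{C}_R$, the Dirichlet conditions $u=v=0$ force $\nabla u=(\partial u/\partial\nu)\nu$ and $\nabla v=(\partial v/\partial\nu)\nu$, so the three integrand terms all collapse to multiples of $(z,\nu)(\partial u/\partial\nu)(\partial v/\partial\nu)$, and since $\nu$ is horizontal we may replace $(z,\nu)$ by $(x,\nu)$; the net coefficient is $2-1=1$. On the bottom $\Omega\times\{0\}$ the outer normal is $(0,\dots,0,-1)$, so $(z,\nu)=0$ kills the last term, while $(z\cdot\nabla u)|_{y=0}=(x,\nabla_x u)$ and $(\nabla v,\nu)|_{y=0}=\partial v/\partial\nu$, producing exactly the two terms on the right-hand side of \eqref{lcxnu}.

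Finally, for the top $\Omega\times\{R\}$ I imitate the proof of Lemma \ref{u0in}: choose a sequence $R_k\in[2^k,2^{k+1}]$ that minimises $\int_{\Omega\times\{y=R\}}(|\nabla u|^2+|\nabla v|^2)\,dx$ on the slab; since $\int_{\mathcal{C}}\nabla u\cdot\nabla v\,dz$ is finite by Cauchy-Schwarz, the quantity $R_k\int_{\Omega\times\{R_k\}}(|\nabla u|^2+|\nabla v|^2)\,dx$ tends to $0$, which controls the top-face contribution (bounded by a constant multiple of $R\int_{\Omega\times\{R\}}|\nabla u||\nabla v|\,dx$) and forces it to vanish along this sequence. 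Combining the three pieces and sending $k\to\infty$ gives \eqref{lcxnu}. The only delicate step is the decay of the top-face integral, but the same sequence argument used for the single-equation case works verbatim here, provided one notes that $C^2(\overline{\mathcal{C}})$-regularity together with $\int_{\mathcal{C}}|\nabla u|^2, \int_{\mathcal{C}}|\nabla v|^2<\infty$ (which follows from $u,v\in H^1_{0,L}(\mathcal{C})$) is enough to justify both the divergence theorem on the truncated cylinder and the limit.
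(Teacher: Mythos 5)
Your proof is correct and follows essentially the same route as the paper: the pointwise identity $\mathrm{div}\{(z\cdot\nabla u)\nabla v+(z\cdot\nabla v)\nabla u - z(\nabla u\cdot\nabla v)\}=-(n-1)\nabla u\cdot\nabla v$ is exactly the paper's identity \eqref{divx}, and the paper likewise integrates over $\Omega\times(0,R)$, splits the boundary into lateral, bottom and top faces, and kills the top face by the same good-slice sequence $R_k$ borrowed from the proof of Lemma \ref{u0in}. Your face-by-face bookkeeping (coefficient $2-1=1$ on $\partial_L\mathcal{C}$, vanishing of $(z,\nu)$ on $\Omega\times\{0\}$) matches the paper's computation, and your explicit remark that finiteness of $\int_{\mathcal{C}}|\nabla u|^2$ and $\int_{\mathcal{C}}|\nabla v|^2$ is what justifies the limit is a point the paper leaves implicit.
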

\begin{proof}
We have
\begin{equation}
\begin{split}
&\textrm{div}[ t^{1-2s} z \cdot \nabla v) \nabla u + t^{1-2s} z\cdot \nabla u \nabla v]
\\
&\quad \quad = (z, \nabla v) \textrm{div}(t^{1-2s}\nabla u) + (z, \nabla u) \textrm{div}(t^{1-2s}\nabla v ) + t^{1-2s} z \cdot \nabla ( \nabla u \cdot \nabla v) + 2 t^{1-2s} \nabla u \cdot \nabla v.
\end{split}
\end{equation}
Therefore, from \eqref{uvoi} in $\mathcal{C}$, we have
\begin{equation}
\textrm{div} [ t^{1-2s}(z,\nabla v) \nabla u + t^{1-2s}(z, \nabla u) \nabla v] = t^{1-2s}z \cdot \nabla ( \nabla u \cdot \nabla v) + 2 t^{1-2s} \nabla u \cdot \nabla v.
\end{equation}
We also have
\begin{eqnarray*}
\textrm{div} [t^{1-2s}  (z) ( \nabla u \cdot \nabla v)] &=& (\textrm{div} t^{1-2s}z) (\nabla u \cdot \nabla v) + t^{1-2s} z \cdot \nabla (\nabla  u \cdot \nabla v)
\\
&=& (n+2-2s)t^{1-2s} (\nabla u \cdot \nabla v) + t^{1-2s} z \cdot \nabla (\nabla u \cdot \nabla v).
\end{eqnarray*}
The above two formulas gives the following equality:
\begin{eqnarray}\label{divx}
\textrm{div} [ t^{1-2s} (z,\nabla v) \nabla u + t^{1-2s} (z, \nabla u) \nabla v] - \textrm{div} (t^{1-2s} z (\nabla u \cdot \nabla v)) + (n-2s) t^{1-2s} \nabla u \cdot \nabla v = 0.
\end{eqnarray}
Using the divergence theorem and the fact that $u=v=0$ on $\partial \Omega \times [0,\infty)$, we get
\begin{equation}
\begin{split}
&\int_{\Omega \times (0,R)} \textrm{div} [t^{1-2s} (z,\nabla v) \nabla u + t^{1-2s} (z, \nabla u) \nabla v]dx
\\
&\quad = \int_{\Omega \times (0,R)} 2 t^{1-2s} (z\cdot \nu) \frac{\partial u}{\partial \nu} \cdot \frac{\partial v}{\partial \nu} d \sigma + \int_{\Omega \times \{y=0\}} [ (x, \nabla_x v) \partial_{\nu}^s u + (x, \nabla_x u) \partial_{\nu}^s v] dx
\\
&\quad \quad+ \int_{\Omega \times \{y=R\}}t^{1-2s} [(x, \nabla_x v)(\nabla u,\nu) + (x, \nabla_x u) ( \nabla v, \nu) ] dx,
\end{split}
\end{equation}
and
\begin{equation}
\int_{\Omega \times (0,R)} \textrm{div} (t^{1-2s}z ( \nabla u \cdot \nabla v)) dx = \int_{\partial \Omega \times (0,R)} t^{1-2s}(z \cdot \nu) ( \frac{\partial u}{\partial \nu} \cdot \frac{\partial v }{\partial \nu} ) d \sigma + \int_{\Omega \times \{ y=R\}} R^{2-2s} (\nabla u \cdot \nabla v) dx.
\end{equation}
Letting $R \rightarrow \infty$ we obtain
\begin{equation}
\begin{split}
&\int_{\mathcal{C}} \textrm{div} [t^{1-2s} (z,\nabla v) \nabla u + t^{1-2s}(z, \nabla u) \nabla v]dx
\\
&\quad = \int_{\mathcal{C}} 2t^{1-2s} (z\cdot \nu) \frac{\partial u}{\partial \nu} \cdot \frac{\partial v}{\partial \nu} d \sigma + \int_{\Omega \times \{y=0\}} [ (x, \nabla_x v) \partial_{\nu}^s u + (x, \nabla_x u) \partial_{\nu}^s v] dx,
\end{split}
\end{equation}
and
\begin{equation}
\int_{\mathcal{C}} \textrm{div} (t^{1-2s}z ( \nabla u \cdot \nabla v)) dx = \int_{\mathcal{C}}t^{1-2s} (z \cdot \nu) ( \frac{\partial u}{\partial \nu} \cdot \frac{\partial v }{\partial \nu} ) d \sigma.
\end{equation}
Integrating \eqref{divx} over $\mathcal{C}$ and using the above two formulas we obtain
\begin{equation*}
 \int_{\mathcal{C}}  t^{1-2s} (z\cdot \nu) \frac{\partial u}{\partial \nu} \cdot \frac{\partial v}{\partial \nu} d \sigma + \int_{\Omega \times \{y=0\}} [ (x, \nabla_x v) \partial_{\nu}^s u + (x, \nabla_x u) \partial_{\nu}^s v] dx
= (n-2s)\int_{\mathcal{C}} t^{1-2s} \nabla u \nabla v dx,
\end{equation*}
which is the desired identity \eqref{lcxnu}.
\end{proof}
\begin{proof}[Proof of Theorem \ref{thm-noex}]
We may assume that $\Omega$ is starshaped with respect to the origin, that is, $(x\cdot \nu) >0$ for any $x \in \partial_{L}\Omega$. It easily implies that $(x\cdot \nu)>0$ holds also for $x \in \partial_{L}\mathcal{C}$.
\

Suppose that $(u,v) \in C^{2}(\bar{\mathcal{C}}) \times C^{2}(\bar{\mathcal{C}})$ satisfies \eqref{eq-ext-sys} and denote also by $u$ and $v$  the harmonic extensions of $u$ and $v$. Let $f(v) = v^p$ and $g(u)=u^p$ and set
\begin{eqnarray*}
F(v) = \int_0^{v} f(s)ds \quad \textrm{and} \quad G(u) = \int^{u}_0 g(s) dx.
\end{eqnarray*}
Because $F(0) =0 $ and $u=0$ on $\partial \Omega \times \{0\}$, we get
\begin{equation}
\begin{split}
&\int_{\Omega \times \{0\}} (x, \nabla_x v) \partial_{\nu}^s u (x) dx = \int_{\Omega \times\{0\}} (x, \nabla_x v) f(v) dx
\\
&\quad = \int_{\Omega \times \{0\}} (x, \nabla_x F(v)) dx = - \int_{\Omega \times \{0\}} n F(v) dx.
\end{split}
\end{equation}
Likewise, we have
\begin{eqnarray*}
\int_{\Omega \times\{0\}} (x,\nabla_x u) \partial_{\nu}^s v(x) dx = -\int_{\Omega \times\{0\}} n G (u) dx.
\end{eqnarray*}
For a solution $(u,v)$ of the system \eqref{12uv}, we get
\begin{equation}\label{2lcx}
\begin{split}
& \frac{1}{2} \int_{\partial_{L}\mathcal{C}}t^{1-2s} (x\cdot \nu) \frac{\partial u}{\partial \nu} \frac{\partial v}{\partial \nu} d\sigma
\\
&\quad =  \int_{\Omega \times \{y=0\}} \biggl( \frac{n}{p+1}- (n-2s) \theta\biggr) v^{p+1} + \biggl(\frac{n}{q+1} - (n-2s) (1-\theta) \biggr) u^{q+1} dx.
\end{split}
\end{equation}
Since $u=v=0$ on $\partial_L \mathcal{C}$ we see that $\frac{\partial u }{\partial \nu} \geq 0$ and $\frac{\partial v}{\partial \nu} \geq 0$ on $\partial_L \mathcal{C}$. If $(p,q)$ is super-critical we can find $\theta \in (0,1)$ such that
\begin{eqnarray*}
\frac{n}{p+1} - (n-2s) \theta <0\quad \textrm{and}\quad \frac{n}{q+1} - (n-2s)(1-\theta) <0.
\end{eqnarray*}
It implies that $u \equiv v \equiv 0$ on $\Omega \times \{0\}$. In the critical case, we have
\begin{eqnarray*}
\frac{1}{2} \int_{\partial_L \mathcal{C}} t^{1-2s} (x,\nu) \frac{\partial u}{\partial \nu}\frac{\partial v}{\partial \nu} d \sigma,
\end{eqnarray*}
which implies that $\frac{\partial u}{\partial \nu}(x_0) =0$ or $\frac{\partial v}{\partial \nu}(x_0)=0$. Since $\textrm{div}(t^{1-2s}\nabla u )= \textrm{div}(t^{1-2s}\nabla v)=0$ and  $u$ and $v$ are nonnegative on $\mathcal{C}$, it follows from Hopf's lemma that $u \equiv 0 $ or $v \equiv 0$, which yields that $u \equiv v \equiv 0$. The proof is complete.
\end{proof}
Next, we shall establish the moving plane argument, which will give the symmetry result and the $L^{\infty}$ bound near the boundary of positive solutions to \eqref{12uv}. As a preliminary step, we need the following lemma.
\begin{lem}\label{thenu}
Assume that $ c\leq 0, ~d \leq 0$ and $\Omega$ is a bounded (not necessary smooth) domain of $\mathbb{R}^n$ and set $\mathcal{C} = \Omega \times (0,\infty)$. Suppose  $u, v \in C^{2} (\bar{\mathcal{C}}) \cap L^{\infty}(\mathcal{C})$ is a solution of the system
\begin{eqnarray}\label{vwin}
\left\{ \begin{array}{ll} \textrm{div}(t^{1-2s}\nabla u )=  \textrm{div}(t^{1-2s}\nabla v) = 0 &\quad \textrm{in} ~\mathcal{C},
\\
u \geq 0,~ v \geq 0 &\quad \textrm{on}~\partial_L \mathcal{C},
\\
\partial_{\nu}^s u + c(x) v \geq 0 &\quad \textrm{on} ~\Omega \times \{0\},
\\
\partial_{\nu}^s v + d(x) u \geq 0 &\quad \textrm{on} ~\Omega \times \{0\},
\end{array}
\right.
\end{eqnarray}
and there is some point $x_0 \in \mathcal{C}$ such that $u(x_0)=v(x_0) =0$. Then, there exists $\delta >0$ depending only on  $\| c \|_{L^{\infty} (\Omega)}$, $\| d \|_{L^{\infty}}$ and $n$ such that if
\begin{eqnarray*}
|\Omega \cap \{ u(\cdot, 0) < 0\}|\cdot |\Omega \cap \{ v(\cdot,0) <0\}| \leq \delta,
\end{eqnarray*}
then $u\geq 0$ and $v \geq 0$ in $\mathcal{C}$.
\end{lem}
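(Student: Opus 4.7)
The plan is to run a weak maximum principle argument for the system, testing against the negative parts $u^- := \max\{-u,0\}$ and $v^- := \max\{-v,0\}$, and then close the resulting estimate using the Sobolev trace embedding together with the smallness of $|\{u(\cdot,0)<0\}|$ and $|\{v(\cdot,0)<0\}|$.

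First I would observe that since $u \geq 0$ and $v \geq 0$ on $\partial_L \mathcal{C}$, the truncations $u^-$ and $v^-$ vanish there; combined with a cutoff in the $y$-variable and the subsequence trick of Lemma~\ref{u0in} (picking $R_k \to \infty$ minimizing $\int_{\Omega\times\{y=R_k\}}|\nabla u^-|^2$), this justifies the identity $\int_{\mathcal{C}}|\nabla u^-|^2\,dz = -\int_{\Omega\times\{0\}}u^-\partial_\nu u\,dx$, and similarly for $v^-$. Using $\partial_\nu u \geq -cv$ on $\{u<0\}\cap(\Omega\times\{0\})$ (which is where $u^-$ is supported) together with the pointwise bound $-cv \leq \|c\|_{L^\infty} v^-$ coming from $c \leq 0$, I would obtain
\[
\int_{\mathcal{C}}|\nabla u^-|^2\,dz \leq \|c\|_{L^\infty}\!\int_{\Omega\times\{0\}} u^- v^-\,dx, \qquad \int_{\mathcal{C}}|\nabla v^-|^2\,dz \leq \|d\|_{L^\infty}\!\int_{\Omega\times\{0\}} u^- v^-\,dx.
\]

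Second, I would close these inequalities by exploiting that $u^- v^-$ is supported on the intersection $\{u(\cdot,0)<0\}\cap\{v(\cdot,0)<0\}$. A four-factor H\"older inequality with exponents $\tfrac{2n}{n-1},\tfrac{2n}{n-1},2n,2n$ (splitting the characteristic function of the intersection as $\mathbf{1}_{\{u<0\}}^{1/2}\mathbf{1}_{\{v<0\}}^{1/2}\cdot\mathbf{1}_{\{u<0\}}^{1/2}\mathbf{1}_{\{v<0\}}^{1/2}$) yields
\[
\int_{\Omega}u^- v^-\,dx \leq \|u^-(\cdot,0)\|_{L^{2n/(n-1)}}\,\|v^-(\cdot,0)\|_{L^{2n/(n-1)}}\,\bigl(|\{u<0\}|\cdot|\{v<0\}|\bigr)^{1/(2n)},
\]
and the Sobolev trace inequality from Section~2 bounds each trace norm by $\|\nabla(\cdot)\|_{L^2(\mathcal{C})}$. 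Setting $A := \|\nabla u^-\|_{L^2(\mathcal{C})}^2$, $B := \|\nabla v^-\|_{L^2(\mathcal{C})}^2$, and $\delta^* := (|\{u<0\}|\cdot|\{v<0\}|)^{1/(2n)}$, the two displayed inequalities collapse to $A \leq K_1 \delta^* \sqrt{AB}$ and $B \leq K_2 \delta^* \sqrt{AB}$ with $K_1,K_2$ depending only on $n,\|c\|_{L^\infty},\|d\|_{L^\infty}$. Adding and using $\sqrt{AB}\leq(A+B)/2$ gives $(A+B)\bigl(1-(K_1+K_2)\delta^*/2\bigr)\leq 0$, and choosing $\delta$ small enough that the hypothesis forces $(K_1+K_2)\delta^*/2 < 1$ forces $A+B=0$, so that $u^-\equiv v^-\equiv 0$ in $\mathcal{C}$.

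The main obstacle I anticipate is the rigorous justification of integration by parts on the unbounded cylinder $\mathcal{C}$, which requires the cutoff/subsequence procedure of Lemma~\ref{u0in} and a verification that $\nabla u^-,\nabla v^- \in L^2(\mathcal{C})$ so that the trace makes sense (interior $L^\infty$ bounds on $u,v$ combined with standard decay estimates for bounded harmonic functions on a half-cylinder should suffice). The hypothesis $u(x_0)=v(x_0)=0$ does not appear to enter this weak-type argument; I suspect it is retained in the statement for use in the subsequent moving plane step, where it may be combined with Hopf's lemma to upgrade the conclusion to strict positivity.
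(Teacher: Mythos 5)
Your argument is essentially the paper's proof: the same truncations $u^-,v^-$, the same integration by parts over $\mathcal{C}$, and the same H\"older-plus-Sobolev-trace closing step (the paper uses a three-factor H\"older giving $|\{u<0\}|^{1/n}$ in one inequality and $|\{v<0\}|^{1/n}$ in the other and then \emph{multiplies} the two inequalities to reach the same smallness condition, rather than symmetrizing and adding, but this is immaterial). One small correction to your closing remark: the hypothesis $u(x_0)=v(x_0)=0$ is precisely what the paper invokes to pass from $\nabla v^-\equiv 0$ to $v^-\equiv 0$, and in your variant the step from ``$A+B=0$, hence $u^-,v^-$ constant'' to ``$u^-\equiv v^-\equiv 0$'' likewise requires either that hypothesis or an explicit appeal to the vanishing of $u^-,v^-$ on $\partial_L\mathcal{C}$, so the assumption is not idle.
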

\begin{proof}
Set $u^{-} = \max \{0, - u\}$ and $v^{-} = \max \{0,-v\}$. As $u^{-}=v^{-} =0$ on $\partial \Omega \times [0,\infty)$, we get
\begin{eqnarray*}
0= \int_{\mathcal{C}} u^{-}  \textrm{div}(t^{1-2s}\nabla u) dx dy = \int_{\Omega \times\{0\}} u^{-} \partial_{\nu}^s u dx + \int_{\mathcal{C}} t^{1-2s} |\nabla u^{-} |^2 dx dy.
\end{eqnarray*}
Then, as $c\leq 0$, we deduce that
\begin{equation}\label{cu2dx}
\begin{split}
\int_{\mathcal{C}} t^{1-2s} |\nabla u^{-}|^2 dx dy =& - \int_{\Omega \times \{0\}} v^{-} \partial_{\nu}^s u dx
\\
= & \int_{\Omega \times \{0\}} u^{-} c v dx
\\
\leq & \int_{\Omega \times \{0\}} u^{-} (-c) v^{-} dx
\\
\leq & |\Omega \cap \{ u^{-} (\cdot,0) >0 \}|^{2s/n} \|c\|_{L^{\infty}  (\Omega)} \| u^{-}\|_{L^{2n/(n-2s)}(\Omega)} \cdot \| v^{-} \|_{L^{2n/(n-2s)} (\Omega)}.
\end{split}
\end{equation}
By the same argument for $v^{-}$, we get
\begin{eqnarray}\label{cv2dx}
\int_{\mathcal{C}} t^{1-2s} |\nabla v^{-}|^2 dx dy&\leq & |\Omega \cap \{ u^{-} (\cdot,0) >0 \}|^{2s/n} \|d\|_{L^{\infty}  (\Omega)} \| u^{-}\|_{L^{2n/(n-2s)}(\Omega)} \cdot \| v^{-} \|_{L^{2n/(n-2s)} (\Omega)}.
\end{eqnarray}
Multipliying the above two inequalities, we obtain
\begin{equation}
\begin{split}
&\left( \int_{\mathcal{C}} t^{1-2s} |\nabla u^{-}|^2 dx dy \right) \left( \int_{\mathcal{C}} t^{1-2s}|\nabla v^{-} |^2 dx dy \right)
\\
&\quad\quad \leq |\Omega \cap \{ u^{-}(\cdot,0) >0 \}|^{1/n} |\Omega \cap \{ v^{-}(\cdot,0) >0\}|^{2s/n} \| c\|_{L^{\infty}(\Omega)} \| d\|_{L^{\infty}(\Omega)} \|u^{-}\|_{L^{2n/(n-2s)}(\Omega)}^2 \| v^{-}\|_{L^{2n/(n-2s)}(\Omega)}^2.
\end{split}
\end{equation}
We now use the Sobolev trace inequality
\begin{eqnarray*}
S_0  {\| u^{-}(\cdot,0)\|_{L^{2n/(n-2s)}(\Omega)}^2}\leq {\int_{\mathcal{C}} |\nabla u^{-} |^2 dx dy}
\end{eqnarray*}
and
\begin{eqnarray*}
 S_0 {\| v^{-}(\cdot,0)\|_{L^{2n/(n-2s)}(\Omega)}^2} \leq {\int_{\mathcal{C}} |\nabla v^{-} |^2 dx dy}.
\end{eqnarray*}
Then it follows that
\begin{equation*}
\begin{split}
&S_0^2 \| u^{-}(\cdot,0)\|_{L^{2n/(n-2s)}(\Omega)}^2 \| v^{-}(\cdot,0)\|_{L^{2n/(n-2s)}(\Omega)}^2
\\
&\quad\quad \leq |\Omega \cap \{ u^{-}(\cdot,0) >0 \}|^{2s/n} |\Omega \cap \{ v^{-}(\cdot,0) >0\}|^{2s/n} \| c\|_{L^{\infty}(\Omega)} \| d\|_{L^{\infty}(\Omega)} \|u^{-}\|_{L^{2n/(n-2s)}(\Omega)}^2 \| v^{-}\|_{L^{2n/(n-2s)}(\Omega)}^2.
\end{split}
\end{equation*}
If we choose $\delta$ so that $S_0^2 > \delta^{1/n} \| c\|_{L^{\infty}(\Omega)} \| d\|_{L^{\infty}(\Omega)}$, the above inequality yields that $u^{-} \equiv 0$ or $v^{-} \equiv 0$. Say $u^{-} \equiv 0$, then we have $\int_{\mathcal{C}} |\nabla v^{-}|^2 dx dy=0$ from \eqref{cv2dx}. Thus we have $\nabla v^{-} \equiv 0$, and since $v(x_0) =0$, we conclude that $v^{-} \equiv  0$. The proof is complete.
\end{proof}

For $y \in \partial \Omega$ and $\lambda >0$ we set
\begin{eqnarray*}
T(y, \lambda) : = \{ x \in \mathbb{R}^n : \langle y-x, \nu (y) \rangle=\lambda\},
\\
\Sigma( y, \lambda) : = \{ x \in \Omega : \langle y-x, \nu (y) \rangle \leq \lambda\},
\end{eqnarray*}
and define $R(y,\lambda)$ be the reflection with respect to the hyperplane $T(y,\lambda)$. We also set $\Sigma'(y,\lambda) := R(y,\lambda) \Sigma(y,\lambda)$ and
\begin{eqnarray}\label{lysup}
\lambda_y := \sup \{ \lambda >0 : \Sigma(y,\lambda) \subset \Omega\}.
\end{eqnarray}
\begin{lem}\label{uvc2}
Suppose that $(u,v) \in C^2 (\Omega)$ is a solution of \eqref{eq-sys}. Then, for any $y \in \partial \Omega$ and  $x \in \Sigma(y,\lambda)$, we have
\begin{eqnarray*}
u(R(y,\lambda)x ) \geq u(x) \quad \textrm{and}\quad  v (R(y,\lambda) x) \geq v(x)
\end{eqnarray*}
for any  $\lambda \in (0, \lambda_y].$
\end{lem}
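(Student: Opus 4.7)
This is the classical moving-plane procedure for the system, with Lemma \ref{thenu} playing the role of the small-measure maximum principle. Using the harmonic extensions $u,v$ on $\mathcal{C}$ and extending the reflection $R(y,\lambda)$ trivially in the $(n+1)$-st coordinate, I consider on $D_\lambda := \Sigma(y,\lambda)\times(0,\infty)$ the differences
\begin{eqnarray*}
U_\lambda(x,t) := u(R(y,\lambda)x,t) - u(x,t), \qquad V_\lambda(x,t) := v(R(y,\lambda)x,t) - v(x,t).
\end{eqnarray*}
Since $R(y,\lambda)$ is an isometry, $\Delta U_\lambda = \Delta V_\lambda = 0$ in $D_\lambda$; both vanish on $T(y,\lambda)\cap \overline{\mathcal{C}}$; and $U_\lambda, V_\lambda \geq 0$ on $(\partial\Omega \cap \partial\Sigma(y,\lambda))\times[0,\infty)$ because $u = v = 0$ there while the reflected points lie in $\overline{\Omega}$ where $u,v \geq 0$. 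On $\Sigma(y,\lambda)\times\{0\}$ the mean value theorem gives $\partial U_\lambda/\partial\nu = v_\lambda^p - v^p = c_0(x) V_\lambda$ and $\partial V_\lambda/\partial\nu = u_\lambda^q - u^q = d_0(x) U_\lambda$ with $0 \leq c_0 \leq p\|v\|_\infty^{p-1}$ and $0 \leq d_0 \leq q\|u\|_\infty^{q-1}$. Setting $c := -c_0 \leq 0$ and $d := -d_0 \leq 0$ casts $(U_\lambda, V_\lambda)$ into the framework of Lemma \ref{thenu} on the domain $\Sigma(y,\lambda)$, with the required vanishing point $x_0$ being any element of $T(y,\lambda)\cap\overline{\mathcal{C}}$.

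\textbf{Starting the planes.} Let $\delta > 0$ be the threshold produced by Lemma \ref{thenu}, which depends only on $n$, $p$, $q$, $\|u\|_\infty$, $\|v\|_\infty$. Since $|\Sigma(y,\lambda)| \lesssim \lambda^n$, the crude bound
\begin{eqnarray*}
|\Sigma(y,\lambda) \cap \{U_\lambda(\cdot,0) < 0\}| \cdot |\Sigma(y,\lambda) \cap \{V_\lambda(\cdot,0) < 0\}| \leq |\Sigma(y,\lambda)|^2 < \delta
\end{eqnarray*}
holds for all sufficiently small $\lambda$, so Lemma \ref{thenu} yields $U_\lambda, V_\lambda \geq 0$ on $D_\lambda$. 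Hence
\begin{eqnarray*}
\lambda^* := \sup \{ \lambda \in (0,\lambda_y] : U_{\lambda'} \geq 0,~ V_{\lambda'} \geq 0 ~\text{on}~ D_{\lambda'} ~\text{for all}~ 0 < \lambda' \leq \lambda \}
\end{eqnarray*}
is well-defined and positive. The goal is to show $\lambda^* = \lambda_y$.

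\textbf{Continuation.} Suppose for contradiction $\lambda^* < \lambda_y$. By continuity $U_{\lambda^*}, V_{\lambda^*} \geq 0$. The strong maximum principle (applied to the harmonic functions on the open cylinder $D_{\lambda^*}$) gives that each of $U_{\lambda^*}, V_{\lambda^*}$ is either identically zero or strictly positive in the interior. The alternative $U_{\lambda^*} \equiv 0$ (analogously $V_{\lambda^*} \equiv 0$) forces $u$ to be symmetric across $T(y,\lambda^*)$; picking $y' \in \partial\Omega \cap \partial\Sigma(y,\lambda^*) \setminus T(y,\lambda^*)$ whose reflection $R(y,\lambda^*)y'$ lies in the interior of $\Omega$ yields $0 = u(y',0) = u(R(y,\lambda^*)y',0) > 0$, a contradiction. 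Hence both are strictly positive inside. Coupling this with Hopf's lemma (where $\partial U_{\lambda^*}/\partial\nu = c_0 V_{\lambda^*} \geq 0$ permits a zero of $U_{\lambda^*}(\cdot,0)$ only when $V_{\lambda^*}(\cdot,0)$ also vanishes there, and symmetrically), I obtain $U_{\lambda^*}(\cdot,0), V_{\lambda^*}(\cdot,0) > 0$ on compact subsets of $\Sigma(y,\lambda^*)$ bounded away from $T(y,\lambda^*)$. Continuous dependence on $\lambda$ then confines the sets $\{U_\lambda(\cdot,0) < 0\}$ and $\{V_\lambda(\cdot,0) < 0\}$ in $\Sigma(y,\lambda)$ to an arbitrarily small neighborhood of $T(y,\lambda^*) \cup (\Sigma(y,\lambda) \setminus \Sigma(y,\lambda^*))$ for $\lambda$ slightly larger than $\lambda^*$, so their product measure is $\leq \delta$. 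A final application of Lemma \ref{thenu} then gives $U_\lambda, V_\lambda \geq 0$ at these $\lambda$, contradicting the maximality of $\lambda^*$.

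\textbf{Main obstacle.} The delicate step is the simultaneous Hopf argument on the mixed Dirichlet-Neumann cylinder: because the Neumann data on $\Omega\times\{0\}$ couple $U_\lambda$ and $V_\lambda$, one cannot exclude boundary zeros of either individually and must treat the pair at once in order to then apply Lemma \ref{thenu} on a set of small measure. Once this coupled boundary positivity is established, the rest of the continuation argument follows the usual moving-plane template.
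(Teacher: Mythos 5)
Your proposal is correct and follows essentially the same route as the paper: harmonic extension, reflection differences $(U_\lambda,V_\lambda)$ satisfying the coupled system with nonpositive coefficients, Lemma \ref{thenu} as the small-measure maximum principle to start the plane, and a sup-plus-compactness continuation that reapplies Lemma \ref{thenu} on the small residual set. The only (minor) difference is that you make the Hopf-lemma step at the Neumann face explicit to get strict positivity of $U_{\lambda^*}(\cdot,0),V_{\lambda^*}(\cdot,0)$ on compacts, a point the paper's proof passes over silently when it asserts lower bounds $\mu,\eta>0$ on the compact set $K\subset\Sigma_{\lambda_1}$.
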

\begin{proof} We may assume that $0 \in \partial \Omega$ and $\nu = (1,0)$ is a normal direction to $\partial \Omega$ at this point. It is sufficient to prove the lemma at this point. For $\lambda>0$ we set
\begin{eqnarray*}
\Sigma_{\lambda} = \{ (x_1,x') \in \Omega : x_1 > \lambda\}\quad \textrm{and}\quad T_{\lambda} = \{ (x_1,x') \in \Omega : x_1 = \lambda\}.
\end{eqnarray*}
For $x \in \Sigma_{\lambda}$, define $x_{\lambda} = ( 2 \lambda - x_1, x')$. From the defintion \eqref{lysup} we see
\begin{eqnarray*}
\{ x_{\lambda} : x \in \Sigma_{\lambda}\} \subset \Omega\quad\quad \forall \lambda < \lambda_0.
\end{eqnarray*}
We denote also by $u$ and $v$ the harmonic extension of $u$ and $v$ in $\mathcal{C}$. Then, $(u,v) \in C^2 (\bar{\mathcal{C}})$ satisfies
\begin{eqnarray}\label{dudv0}
\left\{ \begin{array}{ll} \textrm{div}(t^{1-2s}\nabla u) =\textrm{div}(t^{1-2s}\nabla v) =0 &\quad \textrm{in} ~\mathcal{C},
\\
u =v= 0 & \quad \textrm{on} ~\partial_L \mathcal{C},
\\
\partial_{\nu}^s u = v^{p}, ~\partial_{\nu}^s v = u^{q} &\quad \textrm{on}~ \Omega \times\{0\},
\\
u>0,~ v>0 &\quad \textrm{in} ~\mathcal{C}.
\end{array}\right.
\end{eqnarray}
For $(x,y) \in \Sigma_{\lambda} \times [0,\infty),$ we set
\begin{eqnarray*}
u_{\lambda} (x,y) = u(x_{\lambda}, y) = u (2\lambda -x_1 , x', y)
\end{eqnarray*}
and
\begin{eqnarray*}
\alpha_{\lambda} (x,y) = (u_{\lambda} - u) (x,y),\quad \beta_{\lambda}(x,y) = (v_{\lambda} - v) (x,y).
\end{eqnarray*}
Then we have $u_{\lambda}=v_{\lambda}=0$ on $T_{\lambda}\times [0,\infty)$ and obtain from \eqref{dudv0} that $u_{\lambda}>0$ and $v_{\lambda} > 0$ on $(\partial \Omega \cap \bar{\Sigma}_{\lambda}) \times [0,\infty)$. Since $\partial \Sigma_{\lambda} = T_{\lambda}\cup (\partial \Omega \cap \bar{\Sigma}_{\lambda}) $ we see that $(\alpha_{\lambda}, \beta_{\lambda})$ satisfies
\begin{eqnarray*}
\left\{ \begin{array}{ll}
\textrm{div}(t^{1-2s}\nabla \alpha_{\lambda}) = \textrm{div}(t^{1-2s}\nabla \Delta \beta_{\lambda}) =0 &\quad \textrm{in} ~\Sigma_{\lambda} \times (0, \infty),
\\
\alpha_{\lambda} \geq 0, ~\beta_{\lambda} \geq 0 &\quad \textrm{on}~ (\partial \Sigma_{\lambda}) \times (0,\infty),
\\
\partial_{\nu}^s \alpha_{\lambda} + c_{\lambda}(x) \beta_{\lambda} =0 &\quad \textrm{on}~ \Sigma_{\lambda} \times \{0\},
\\
\partial_{\nu}^s \beta_{\lambda} + d_{\lambda}(x) \alpha_{\lambda} =0 &\quad \textrm{on}~ \Sigma_{\lambda} \times \{0\},
\end{array}
\right.
\end{eqnarray*}
where
\begin{eqnarray*}
c_{\lambda}(x,0) = - \frac{v_{\lambda}^{p} - v^{p}}{v_{\lambda} - v}\quad \textrm{and} \quad d_{\lambda}(x,0) = - \frac{u_{\lambda}^{p} - u^{p}} {u_{\lambda} -u}.
\end{eqnarray*}
Note that $c_{\lambda} \leq 0$ and $d_{\lambda} \leq 0$. Now we choose a small number $\kappa >0$ so that the set  $\Sigma_{\lambda}$ has small measure  for $0<\lambda <\kappa$. We then deduce from Lemma \ref{vwin} that, for all $\lambda \in (0,\kappa)$,
\begin{eqnarray*}
\alpha_{\lambda} \geq 0 \qquad \textrm{and} \quad \beta_{\lambda} \geq 0 \quad \textrm{on}~ \Sigma_{\lambda} \times (0,\infty).
\end{eqnarray*}
The strong maximum principle implies that $\alpha_{\lambda} $ and $\beta_{\lambda} $ are identically equal to zero or strictly positive in $\Sigma_{\lambda} \times (0,\infty)$. Since $\lambda >0$, we have $\alpha_{\lambda} >0$ and $\beta_{\lambda} >0$ in $(\partial \Omega \cap \partial \Sigma_{\lambda}) \times (0,\infty)$, and so we deduce that $\alpha_{\lambda} >0$ and $\beta_{\lambda}>0$ in $\Sigma_{\lambda} \times (0,\infty)$.
\

We let $\lambda_1 = \sup\{ \lambda >0 | \alpha_{\lambda} \geq 0 ~\textrm{and} ~\beta_{\lambda} \geq 0 ~ \textrm{in} ~\Sigma_{\lambda} \times (0,\infty)\}.$ We claim that $\lambda_1 =\lambda_0$. With a view to  contradiction, we suppose that $\lambda_1 < \lambda_0$. By continuity we have $\alpha_{\lambda_1} \geq 0$ and $\beta_{\lambda_1} \geq 0$ in $\Sigma_{\lambda_1} \times (0,\infty)$. As before, from the strong maximum principle, we have that $\alpha_{\lambda_1} >0$ and $\beta_{\lambda_1} >0$ in $\Sigma_{\lambda_1} \times (0,\infty)$. Next, let $\delta>0$ be a constant and find a compact set $K \subset \Sigma_{\lambda_1}$ such that $|\Sigma_{\lambda_1}\setminus K|\leq \delta/2$. We have $\alpha_{\lambda_1} \geq \mu >0$ and $\beta_{\lambda_1} \geq \eta>0$ in $K$ for some constant $\eta$, since $K$ is compact. Thus, we obtain that $\alpha_{\lambda_1 + \epsilon}(\cdot, 0) \geq 0$ and $\beta_{\lambda_1 +\epsilon}(\cdot, 0) \geq 0$ in $K$ and that $|\Sigma_{\lambda_1 + \epsilon} \setminus K|\leq \delta$ for sufficiently small $\epsilon >0$.
\

By using Lemma \ref{vwin} in $\Sigma_{\lambda_1 + \epsilon} \times (0,\infty)$ to the function $(\alpha_{\lambda_1 +\epsilon}, \beta_{\lambda_1 + \epsilon})$, we have that $\alpha_{\lambda_1 +\epsilon} \geq 0$ and $\beta_{\lambda_1 +\epsilon} \geq 0$ in $K$. Thus $\{ \alpha_{\lambda_1 +\epsilon} <0\}, \{ \beta_{\lambda_1 +\epsilon} <0\} \subset \Sigma_{\lambda_1 + \epsilon} \setminus K$, which have measure at most $\delta$. We take $\delta$ to be the constant of Lemma \ref{vwin}. Then we deduce that
\begin{eqnarray*}
\alpha_{\lambda_1 +\epsilon} \geq 0 \quad \textrm{and} \quad \beta_{\lambda_1 + \epsilon} \geq 0 \quad \textrm{in} ~\Sigma_{\lambda_1 +\epsilon} \times (0,\infty).
\end{eqnarray*}
This is a contradiction to the definition of $\lambda_1$. Thus, we have that $\lambda_1 =\lambda_0$, which proves the lemma.
\end{proof}
This lemma gives the following symmetry result of Theorem \ref{thm-syme}.
We are now ready to prove Theorem \ref{letn22}.
\begin{proof}[Proof of Theorem \ref{letn22}]
Since $\Omega$ is convex and smooth, there exist constants $\lambda_0 >0$ and $c_0 >0$ such that
\begin{eqnarray}\label{yand}
\Sigma' (y,\lambda) \subset \Omega,\quad \lambda \leq \lambda_0,\quad \textrm{and}\quad (\nu(x),\nu(y)) > c_0, \quad x \in \partial \Sigma (y, \lambda_0) \cap \partial \Omega.
\end{eqnarray}
If $(p,q)$ is sub-critical, we may choose $\theta \in (0,1)$ so that
\begin{eqnarray*}
\frac{n}{p+1} - (n-2s) \theta >0 \quad \textrm{and} \quad \frac{n}{q+1} - (n-2s)(1-\theta) >0.
\end{eqnarray*}
Recall that $\phi_1$ is the positive eigenfunction of $-\Delta$ on $\Omega$ with the eigenvalue $\lambda_1$. Using \eqref{eq-sys} we obtain
\begin{eqnarray*}
\int_{\Omega} v^{p} \phi_1 dx = \int_{\Omega} \sqrt{\lambda_1} u \phi_1 dx\quad \textrm{and}\quad \int_{\Omega} u^{q} \phi_1 dx = \int_{\Omega} \sqrt{\lambda_1} v \phi_1 dx.
\end{eqnarray*}
We use a convex inequality to get
\begin{eqnarray*}
\int_{\Omega} \lambda_1 u \phi_1 dx \geq C (\int_{\Omega} v \phi_1 dx)^{p}
\quad \textrm{and}\quad \int_{\Omega} \lambda _1 v \phi_1 dx \geq C (\int_{\Omega} u \phi_1 dx)^{q},
\end{eqnarray*}
which yields that
\begin{eqnarray*}
\int_{\Omega} v \phi_1 dx \leq C \quad \textrm{and} \quad \int_{\Omega} u \phi_1 dx \leq C.
\end{eqnarray*}
We also have
\begin{eqnarray*}
\int_{\Omega} ( v^p + u^{q} ) \phi_1 dx \leq C.
\end{eqnarray*}
From \eqref{yand}, Lemma \ref{uvc2} and the above inequality, we can obtain  $L^{\infty}$-bound for $(u,v)$ near the boundary $\partial \Omega$. Thus we obtain
\begin{eqnarray*}
\int_{\Omega} ( v^{p+1} + u^{q+1}) dx \leq C.
\end{eqnarray*}
We now use the bootstrap argument to improve the integrability of $v$ and $u$. For this, we need
\begin{eqnarray*}
\frac{p}{(p+1)\rho^{i}} - \frac{1}{(q+1) \rho^{i+1}} < \frac{1}{n}\quad \textrm{and} \quad \frac{q}{(q+1)\rho^i} - \frac{1}{(p+1)\rho^{i+1}} < \frac{1}{n}.
\end{eqnarray*}
It is enough to get
\begin{eqnarray*}
\frac{p}{p+1} - \frac{1}{(q+1)\rho} < \frac{1}{n} \quad \textrm{and} \quad \frac{q}{q+1} - \frac{1}{(p+1) \rho} < \frac{1}{n}.
\end{eqnarray*}
We need to choose $\rho$ so that
\begin{eqnarray*}
\frac{1}{\rho} > \max \left[ (q+1) \left( \frac{n-2s}{n} - \frac{1}{p+1}\right) , (p+1) \left( \frac{n-2s}{n} -\frac{1}{q+1}\right)\right].
\end{eqnarray*}
Because $\frac{1}{p+1} + \frac{1}{q+1} = \frac{n-2s}{n} + \epsilon$, we may choose $\rho$ so that
\begin{eqnarray*}
\frac{1}{\rho} > 1 - \epsilon \min(p+1, q+1).
\end{eqnarray*}
This enables the bootstrapping. The proof is complete.
\end{proof}

\section*{Acknowledgements}\thispagestyle{empty}
I wish to thank my advisor Prof. Rapha\"el Ponge for his support and valuable suggestions during the preparation of this paper.

\end{document}